\title{Connected components of definable groups, and $o$-minimality II.}
\date{June 3, 2011}
\author{Annalisa Conversano\\University of Konstanz \and Anand Pillay \thanks{Supported
by EPSRC grant EP/I002294/1}\\University of Leeds}
\newtheorem{Theorem}{Theorem}[section]
\newtheorem{Proposition}[Theorem]{Proposition}
\newtheorem{Definition}[Theorem]{Definition} 
\newtheorem{Remark}[Theorem]{Remark}
\newtheorem{Lemma}[Theorem]{Lemma}
\newtheorem{Corollary}[Theorem]{Corollary}
\newtheorem{Fact}[Theorem]{Fact}
\newcommand{\R}{\mathbb R}  
\newcommand{\Z}{\mathbb Z}  
\newcommand{\N}{\mathbb N}  
\newcommand{\C}{\mathbb C}
\renewcommand{\geq}{\geqslant}
\DeclareMathOperator{\SL}{SL} 
\DeclareMathOperator{\SO}{SO}
\DeclareMathOperator{\SU}{SU}
\DeclareMathOperator{\GL}{GL}
\begin{document}
\maketitle

%%%%%%%%%%%%%%%%%%%%%%%%%%%%%%%%%%%%%%%%%
\begin{abstract} In this sequel to \cite{CCI} we try to give a comprehensive account of the ``connected components" $G^{00}$ and $G^{000}$ as well as the various quotients $G/G^{00}$, $G/G^{000}$, $G^{00}/G^{000}$, for $G$ a group definable in a (saturated) $o$-minimal expansion of a real closed field. Key themes are the structure of $G^{00}/G^{000}$ and the problem of ``exactness" of the $G \mapsto G^{00}$ functor. We prove that the examples produced in \cite{CCI} are typical, and that for any $G$, $G^{00}/G^{000}$ is naturally the quotient of a connected compact commutative Lie group by a dense finitely generated subgroup (where we allow the trivial Lie group).
\end{abstract}
%%%%%%%%%%%%%%%%%%%%%%%%%%%%%%%%%%%%%%%%%%
\section{Introduction and preliminaries}
In this paper we restrict ourselves to complete theories $T$ which are $o$-minimal expansions of $RCF$. $G$ will be a group definable in a saturated model ${\bar M}$ of $T$. There are, in principle, three kinds of ``connected component" of $G$; $G^{0} \geq G^{00} \geq G^{000}$.  There are many examples where the first inclusion is strict (such as any definably compact group). In \cite{CCI} we gave the first example where the second inclusion is strict (Example 2.10 and Theorem 3.3 there). Here we will, among other things, extend the analysis of that example to an analysis of the general situation, showing that, in a suitable sense, any nontriviality of $G^{00}/G^{000}$ is explained by the encoding of universal covers of suitable simple noncompact Lie groups as $\bigvee$-definable subgroups of definable central extensions. 

Below (Proposition 1.2) we will give a structure theorem for arbitrary $G$, identifying a key definable section of $G$ which we call $D$, a central extension of a ``strictly non definably compact" semisimple group by a definably compact group. In section 2, we begin the analysis of $G^{00}$ and $G^{000}$ for arbitrary $G$, proving among other things that $G^{00}/G^{000} = D^{00}/D^{000}$.  In section 3 we study the ``connected components" and respective quotients of $D$, proving that $D^{00}/D^{000}$ is naturally the quotient of a commutative compact Lie group by a countable dense subgroup. In section 4 we study various forms of exactness of the functor taking $G$ to $G^{00}$, and again show that the obstacles to exactness come from the group $D$. In particular we show that $G$ has a certain ``almost exactness" property if and only if $G^{00} = G^{000}$. In section 5 we return to the study of $D$, improving results from section 3,  and relating $D^{00}/D^{000}$ to universal covers: in particular proving that $D^{00}/D^{000}$ is naturally isomorphic to the quotient of a certain connected compact commutative Lie group $A$ by a dense finitely generated subgroup $\Lambda$, where $\Lambda$ is a quotient of the fundamental group of a (finite centre) semisimple Lie group related to $D$. 
We will also touch on maximal compact subgroups of Lie groups (section 2) as well as definable amenability (section 4).

As in the prequel, many themes and results of the current paper appear in one form or another in the first author's doctoral thesis \cite{Conversano-thesis} (although there is no explicit mention of $G^{000}$ in the latter). She would like to again thank Alessandro Berarducci for his supervision of her doctoral work, as well as Ya'acov Peterzil for helpful conversations. Some of the writing of this paper was done when the second author was visiting the University of Wroclaw in April 2011, and he would like to thank Wroclaw University for providing excellent conditions, as well as Krzysztof Krupinski for stimulating conversations on the topic of the paper. We should also say that in addition to being a sequel to \cite{CCI} this paper is in many ways a natural continuation of \cite{HPP}.

We will assume acquaintance with \cite{CCI}, and the reader is referred there for the basic definitions.

We work in a saturated model ${\bar M}$, an $o$-minimal expansion of a real closed field, and $G$ denotes a group definably in ${\bar M}$, usually definably connected, i.e. $G = G^{0}$. 

Throughout this paper we will make heavy use of the following result, essentially Corollary 5.3 from \cite{HPP}:
\begin{Lemma} \label{commutator} Suppose $G$ is (definably) a definably connected central extension of a semisimple group. Then $[G,G] \cap Z(G)$ contains no infinite definable set. In particular, for each $n$, $[G,G]_{n}\cap Z(G)$ is finite (where $[G,G]_{n}$ is the set of $n$-fold products of commutators) and $[G,G]\cap Z(G)$ is countable. 
\end{Lemma}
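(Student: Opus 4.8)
The plan is to first dispatch the ``in particular'' clause and reduce the main assertion to a clean finiteness statement, and then to isolate the genuinely central-extension-theoretic content. Each $[G,G]_{n}$ is the image of the definable map $G^{2n}\to G$, hence definable, so $C_{n}:=[G,G]_{n}\cap Z(G)$ is definable; and since $[G,G]=\bigcup_{n}[G,G]_{n}$ is an increasing union, any definable $X\subseteq[G,G]\cap Z(G)$ lies in some $[G,G]_{n}$ by saturation. Thus ``$[G,G]\cap Z(G)$ contains no infinite definable set'' is equivalent to ``each $C_{n}$ is finite'', and granting this, $[G,G]\cap Z(G)=\bigcup_{n}C_{n}$ is a countable union of finite sets, which gives countability. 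So the target is: each $C_{n}$ is finite. One further reduction: were some $C_{n}$ infinite it would have dimension $\geq 1$, and as $Z(G)$ is a definable abelian group the subgroup it generates is definable, so its definably connected component is a positive-dimensional definably connected subgroup $Z_{1}\leq Z(G)\cap[G,G]$. Hence it suffices to show that $Z(G)\cap[G,G]$ contains no positive-dimensional definably connected subgroup.

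Suppose toward a contradiction that such a $Z_{1}$ exists. Since $Z(G)$ is central, the commutator map factors through $(G/Z(G))\times(G/Z(G))$, and $\bar H:=G/Z(G)$ is a centreless semisimple group; by the structure theory of semisimple groups definable in $o$-minimal expansions of $RCF$ (Peterzil--Pillay--Starchenko) $\bar H$ is a central product of definably simple, hence linear, groups. I would then split the definably connected definable abelian group $Z_{1}$ into its torsion-free (``vector'') part and its definably compact (``torus'') part and treat these separately. For the vector part one expects the extension to split, this being the $o$-minimal incarnation of the Whitehead vanishing $H^{2}(\mathfrak h,\R)=0$ for semisimple $\mathfrak h$ with trivial coefficients; so the vector part meets $[G,G]$ in a finite set and may be discarded. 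This leaves the case that $Z_{1}$ is positive-dimensional and definably compact.

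The crux is to rule out a positive-dimensional definably compact central $Z_{1}\leq Z(G)\cap[G,G]$, and this is exactly where the ``universal cover'' phenomenon enters: an infinite central extension of a semisimple group realising a nontrivial part of the fundamental group is only $\bigvee$-definable, never definable. Concretely, I would argue that a genuinely definable central extension of $\bar H$ by the definably compact $Z_{1}$ corresponds to a finite datum (a finite cover), whence $Z_{1}\cap[G,G]$ is finite, contradicting $Z_{1}\subseteq[G,G]$; this invokes the results on definable covers of semisimple groups and the finiteness of the ``definable fundamental group'', and is in substance Corollary~5.3 of \cite{HPP}. The main obstacle is precisely this separation of the finite, definable central data from the infinite, merely $\bigvee$-definable data coming from $\pi_{1}$. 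An alternative, if one is content to transfer, is to note that ``$C_{n}$ is finite'' is first-order with a uniform bound and can be verified in the associated real Lie group, where $G(\R)$ is a connected reductive (central-radical) Lie group whose derived group meets the central torus in a finite set for each fixed commutator length $n$; the uniform bound then transfers back to $\bar M$.
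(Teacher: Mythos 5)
Your first paragraph coincides exactly with the paper's proof: the paper simply quotes Corollary 5.3 of \cite{HPP}, which states verbatim that each $[G,G]_{n}\cap Z(G)$ is finite, and then makes precisely your two observations --- countability as a countable union of finite sets, and, by compactness in the saturated model, any definable subset of the increasing union $[G,G]\cap Z(G)$ lies in some $[G,G]_{n}\cap Z(G)$ and is therefore finite. Had you stopped there and cited Corollary 5.3 outright (as you nearly do when you say the crux ``is in substance Corollary~5.3 of \cite{HPP}''), the proof would be complete and identical to the paper's.

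The trouble is with the argument you interpose in place of that citation. The step ``as $Z(G)$ is a definable abelian group the subgroup it generates is definable'' is false: in an $o$-minimal structure the subgroup generated by an infinite definable set need not be definable. For instance, $[0,1]$ in $(K,+)$ generates the group of finite elements, and a closed arc of infinitesimal length about the identity of $\SO_{2}(K)$ generates an infinite proper subgroup --- hence a non-definable one, since proper definable subgroups of the definably connected one-dimensional group $\SO_{2}(K)$ are finite. So your reduction to ``$Z(G)\cap[G,G]$ contains no positive-dimensional definably connected definable subgroup'' does not go through, and the subsequent vector/torus analysis, even if carried out, would not yield finiteness of the sets $C_{n}$; moreover that analysis (Whitehead-type splitting, covers as finite data) is itself the substance of Section 5 of \cite{HPP}, so nothing would be gained over the direct citation. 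Note that non-definability of such generated subgroups is the very phenomenon the lemma is about: in the key example of \cite{CCI}, $[G,G]\cap Z(G)$ is an infinite, merely locally definable copy of $\Z$ inside $\SO_{2}(K)$. Your fallback transfer argument also has a genuine gap: ``$C_{n}$ is finite'' is not first-order without a uniform cardinality bound, and producing that bound is the content; more seriously, an arbitrary definable central extension in an arbitrary $o$-minimal expansion of $RCF$ has no ``associated real Lie group'' --- very good reduction for precisely these central extensions is the deep Theorem 6.1(2) of \cite{HPP} (quoted as Lemma 5.2 in this paper), not something one may assume --- and even over $\R$ the finiteness of $[G,G]_{n}\cap Z(G)$ is a nontrivial Milnor--Wood-type boundedness phenomenon rather than a routine verification in Lie theory.
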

\begin{proof}  Corollary 5.3 of \cite{HPP} says that $[G,G]_{n}\cap Z(G)$ is finite for all $n$. Countability of $[G,G]\cap Z(G)$ follows. But the first part of the Lemma also follows because by compactness any definable subset of $[G,G]\cap Z(G)$ is contained in some $[G,G]_{n}\cap Z(G)$. 
\end{proof}

We now recall and refine the rudimentary structure theorem Proposition 2.6 of \cite{CCI} and fix notation for the remainder of the paper.

As in \cite{CCI} we let $W$ denote the maximal normal ``torsion-free" definable subgroup of $G$. $W$ is definably connected and contained in the {\em solvable radical} $R$ of $G$ (maximal normal definable solvable subgroup). $G/R$ is semisimple.
Proposition 2.6 of \cite{CCI} gives a definably connected definably compact normal subgroup $C$ of $G/W$ such that $(G/W)/C$ is (definably connected) semisimple with no definably compact parts. Let $D_{2}$ denote $(G/W)/C$. Moreover $Z(C)^{0} = Z(G/W)^{0} = R/W$.

%Let us fix the notation. For a definably connected group $G$, let $W$ be {\em the maximal normal definable torsion-free subgroup} and $R$ {\em the solvable radical} of $G$. We recall that $W \subseteq R$, $R/W$ is
%definably compact and $G/R$ is semisimple. Moreover $R/W$ is the solvable radical (and the connected component of the centre) of $G/W$. Set 
%$\bar{G} = G/W$. Since $\bar{G}/Z(\bar{G})^0$ is semisimple, by \cite{HPP} it follows that $\bar{G} = Z(\bar{G})^0 \cdot [\bar{G}, \bar{G}]$.
%Also $\bar{G} = C \cdot D$, where $C$ is {\em the normal compact part} of $G$ (i.e. the maximal normal definably connected definably compact
%subgroup of $\bar{G}$) and $D$ is a central extension of a {\em semisimple group with no compact parts}  by the definably compact group $Z(\bar{G})$ (so$D = Z(D)^0 \cdot [D, D]$).

%Note also that $Z(\bar{G})^0 = Z(C)^0 = Z(D)^0$. Therefore $\bar{G}$ is an almost direct product $\bar{G} = C^{\prime} \cdot D$, where $C^{\prime} = [C, C]$ is a definably connected definably compact semisimple group
%(\cite{HPP}). So we have the following:

\begin{Proposition} \label{decomposition}
Let 
$\bar{G} = G/W$, and $C^{\prime}$ denote the commutator subgroup of $C$. Then 

\begin{enumerate}
\item[$(a)$] $C^{\prime}$ is definable, definably connected, semisimple and definably compact.

\item[$(b)$] $\bar G$ is the almost direct product of $C^{\prime}$ and some definable, definably connected $D$ where,

\item[$(c)$] $D$ is a central extension of a semisimple group with no definably compact parts by a definably compact group.
\end{enumerate}
\end{Proposition}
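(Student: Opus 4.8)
The plan is to deduce (a) from the structure theory of definably compact definably connected groups, and then to realise $D$ as the definably connected component of the centraliser of $C'$ in $\bar G$. For (a), I would invoke the known structure of a definably compact, definably connected group $C$: it is the almost direct product of $Z(C)^0$ and its derived subgroup $C'=[C,C]$, where $C'$ is definable, definably connected, semisimple and definably compact (cf. \cite{HPP}, \cite{CCI}). Definability of $C'$ rests on bounding the number of commutators needed in the definably compact case; semisimplicity (finite centre and perfectness) and definable connectedness are part of this decomposition, and $C'$ is definably compact as a definable subgroup of the definably compact group $C$.

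For (b), note that $C'$ is characteristic in $C$ and $C$ is normal in $\bar G$, so $C'$ — and hence $C_{\bar G}(C')$ — is normal and definable in $\bar G$; set $D:=C_{\bar G}(C')^0$. The crucial claim is $\bar G=C'\cdot C_{\bar G}(C')$. Conjugation yields a definable homomorphism $\bar G\to\operatorname{Aut}(C')$, and since $C'$ is semisimple its inner automorphism group $\operatorname{Inn}(C')\cong C'/Z(C')$ is a definably connected normal definable subgroup of $\operatorname{Aut}(C')$ with finite quotient $\operatorname{Out}(C')$. As $\bar G$ is definably connected, its image lies in $\operatorname{Inn}(C')$; thus conjugation by any $g\in\bar G$ agrees on $C'$ with conjugation by some $c\in C'$, giving $c^{-1}g\in C_{\bar G}(C')$ and $g\in C'\cdot C_{\bar G}(C')$. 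Now $C'\cdot D$ is a definably connected normal definable subgroup whose index in $\bar G=C'\cdot C_{\bar G}(C')$ is finite (being a quotient of the finite group $C_{\bar G}(C')/D$), so definable connectedness of $\bar G$ forces $\bar G=C'\cdot D$. Finally $C'\cap D\subseteq C'\cap C_{\bar G}(C')=Z(C')$ is finite and $[C',D]=1$, so $\bar G$ is the almost direct product of $C'$ and $D$, proving (b).

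For (c), put $N:=D\cap C$. Since $Z(C)^0=Z(\bar G)^0\subseteq C_{\bar G}(C')^0=D$ and $C=Z(C)^0\cdot C'$, a short computation gives $N\subseteq Z(C)^0\cdot Z(C')$, so $N$ contains $Z(C)^0$ with finite index; in particular $N$ is a definable subgroup of $C$, hence definably compact, and it is central in $D$ because $Z(C)^0$ is central in $\bar G$ while $D$ centralises $C'\supseteq Z(C')$. As $C'\subseteq C$ we have $DC=C'DC=\bar G$, so $D/N\cong DC/C=\bar G/C=D_2$, which is semisimple with no definably compact parts. Hence $D$ is a central extension of the semisimple, no-compact-part group $D_2$ by the definably compact central subgroup $N$, which is (c).

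The main obstacle is the almost-direct-product step of (b): showing that the conjugation action of the definably connected $\bar G$ on the semisimple $C'$ is by inner automorphisms. This relies on the rigidity of definable semisimple groups — finiteness of $\operatorname{Out}(C')$ together with definability and definable connectedness of $\operatorname{Inn}(C')$ inside $\operatorname{Aut}(C')$ — and on the fact that a definably connected group maps into the definably connected component of any definable group on which it acts definably. Once this is in place, the index argument forcing $\bar G=C'D$ and the identifications of $N$ and of $D/N$ are routine bookkeeping with the almost direct product structure.
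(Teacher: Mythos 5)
Your construction is genuinely different from the paper's, and most of it checks out: (a) is the same appeal to the decomposition $C = Z(C)^{0}\cdot C^{\prime}$ from \cite{HPP}, and in (b) and (c) the finite-index argument forcing $\bar G = C^{\prime}\cdot D$ (using that $C_{\bar G}(C^{\prime})^{0}$ has finite index in $C_{\bar G}(C^{\prime})$ and that a definably connected group has no proper definable finite-index subgroup), the computation $N = D\cap C \subseteq Z(C)^{0}\cdot Z(C^{\prime})$, centrality of $N$ in $D$, and $D/N \cong \bar G/C = D_{2}$ are all correct — \emph{granted} the pivotal claim $\bar G = C^{\prime}\cdot C_{\bar G}(C^{\prime})$. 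But your justification of that claim is a genuine gap. In a saturated model, $\operatorname{Aut}(C^{\prime})$ is an abstract group which is not definable, and $\operatorname{Out}(C^{\prime})$ is far from finite: if $C^{\prime}$ is (say) semialgebraic over the prime model, every field automorphism of the saturated real closed field $K$ induces an automorphism of $C^{\prime}$ (think of $\SO_{3}(K)$), and almost none of these are inner. So ``$\operatorname{Inn}(C^{\prime})$ is a definably connected normal definable subgroup of $\operatorname{Aut}(C^{\prime})$ with finite quotient'' is false as stated, and the principle ``a definably connected group maps into the definably connected component of a definable group it acts on'' has no definable target group to be applied to. The statement you actually need — that conjugation by the definably connected $\bar G$ acts on the normal semisimple $C^{\prime}$ by inner automorphisms — is true, but in the o-minimal category it must be proved via the adjoint action on the definable Lie algebra $\mathfrak{c}$ of $C^{\prime}$ (Peterzil--Pillay--Starchenko machinery): one needs that $\operatorname{Aut}(\mathfrak{c})$ is a definable subgroup of $\GL(\mathfrak{c})$ whose definably connected component is $\operatorname{Ad}(C^{\prime})$ (via $\operatorname{Der}(\mathfrak{c}) = \operatorname{ad}(\mathfrak{c})$ for semisimple $\mathfrak{c}$), and additionally that a definable automorphism of the definably connected $C^{\prime}$ inducing the identity on $\mathfrak{c}$ centralises $C^{\prime}$. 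None of this is a one-line citation; it is a substantive lemma you have assumed.

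The paper sidesteps this rigidity issue entirely, which is worth internalising. Since $Z(C)^{0} = Z(\bar G)^{0}$, quotienting by it yields a semisimple group $\bar G/Z(C)^{0}$, which decomposes (by \cite{HPP}) as an almost direct product of a definably compact part $C_{2}$ (isogenous image of $C^{\prime}$) and a no-definably-compact part identified with $D_{2} = (G/W)/C$; setting $D := \pi^{-1}(D_{2})$ makes $D$ definable, definably connected, and \emph{automatically} a central extension of $D_{2}$ by $Z(C)^{0}$, because $Z(C)^{0}$ is central in all of $\bar G$ — no automorphism analysis needed. The one step the paper leaves implicit, that $C^{\prime}$ and $D$ commute elementwise, follows from a cheap trick you could also use to repair your proof: $[C^{\prime},D] \subseteq \ker\pi = Z(C)^{0}$ is central, so for fixed $d \in D$ the map $c \mapsto [c,d]$ is a homomorphism from the perfect group $C^{\prime}$ to an abelian group, hence trivial. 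That gives $\pi^{-1}(D_{2}) \subseteq C_{\bar G}(C^{\prime})$ and hence your identity $\bar G = C^{\prime}\cdot C_{\bar G}(C^{\prime})$ — but the route runs through the central quotient and perfectness, not through finiteness of $\operatorname{Out}(C^{\prime})$.
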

\begin{proof} By \cite{HPP}, $C = Z(C)^{0}\cdot C^{\prime}$  (almost direct product), where $C^{\prime}$ is definable, definably connected, and semisimple. Clearly both $Z(C)^{0}$ and $C^{\prime}$ are still definably compact, and as remarked above $Z(C)^{0}$ is the connected component of $Z({\bar G})$. Then  ${\bar G}/Z(C)^{0}$ is an almost direct product of $D_{2}$ (which remember was $(G/W)/C)$) and a semisimple definably compact $C_{2}$, such that $C^{\prime}$ is isogenous to $C_{2}$ under the (map induced by the) surjective homomorphism $\pi:{\bar G} \to {\bar G}/Z(C)^{0}$. 
Let $D = \pi^{-1}(D_{2})$. Then $D$ is a definable, definably connected subgroup of ${\bar G}$, and moreover $D$ is a central extension of $D_{2}$ (a semisimple group with no definably compact part) by the definably compact group $Z(C)^{0}$. Clearly ${\bar G}$ is an almost direct product of $C^{\prime}$ and $D$.
\end{proof}

%%%%%%%%%%%%%%%%%%%%%%%%%%%%%%%%%%%%%%%
We recall from \cite{CCI}:

\begin{Definition} \label{goodecomposition}
We say that $G$ has a {\em good decomposition}, if the exact sequence $1 \to C \to G/W \to D_{2}$ almost splits, namely $G/W$ can be written as an almost direct product  $C\cdot D_{3}$ for some definable, definably connected, subgroup $D_{3}$ of $G/W$ which is semisimple with no definably compact parts.
\end{Definition}
 
\begin{Remark} \label{goodecompositionD} 
$G$ has a good decomposition if and only if the commutator subgroup $D^{\prime}$ of $D$ is definable.
\end{Remark}

\begin{proof}
If the commutator subgroup $D^{\prime}$ of $D$ is definable, then by Lemma 1.1, its intersection with $Z(D)$ is finite. On the other hand  by 3.1 (v) of \cite{HPP}, $D/Z(D)$ is perfect. It follows that $D$ is an almost direct product  of $Z(D)^{0} = Z(C)^{0}$ with the semisimple definable group $D^{\prime}$  (with no definably compact parts). But then $G/W = C^{\prime}\cdot Z(C)^{0}\cdot D^{\prime} = C \cdot D^{\prime}$ as required.

Conversely, suppose $G$ has a good decomposition: ${\bar G} = G/W = C\cdot D_{3} = C^{\prime}\cdot Z(C)^{0}\cdot D_{3}$. Then $Z(C)^{0}\cdot D_{3}$ is (definably) isogenous to $D$. But (using perfectness of the semisimple group $D_{3}$) the commutator subgroup of $Z(C)^{0}\cdot D_{3}$ is precisely $D_{3}$ which is definable. So the commutator subgroup of $D$ is also definable.
% Then $\bar{G} = G/W = C^{\prime} \cdot D = C \cdot [D, D] = C \cdot D_2$. So the commutator subgroup of $\bar{G}$ is definable, indeed 
%\[
%[\bar{G}, \bar{G}] = C^{\prime} \cdot D_2 = C^{\prime} \cdot [D, D].
%\]

%Since $C^{\prime} \cap [D, D]$ is finite, it follows that $[D, D]$ is definable.
\end{proof}
%%%%%%%%%%%%%%%%%%%%%%%%%%%%%%%%%%%%%%%%%%
%%%%%%%%%%%%%%%%%%%%%%%%%%%%%%%%%%%%%%%%%% 
%%%%%%%%%%%%%%%%%%%%%%%%%%%%%%%%%%%%%%%%%%

\section{$G^{00}$ and  $G^{000}$}  
We recall that $G^{00}$ is the smallest type-definable (over a small set of parameters) subgroup of $G$ of bounded index, and $G^{000}$ is the smallest bounded index subgroup of $G$ which is $Aut({\bar M}/A)$-invariant for some small set $A$ of parameters. We refer to the prequel \cite{CCI} for more details and background. In any case we have $G \geq G^{00} \geq G^{000}$ and both $G^{00}$, $G^{000}$ are normal in $G$. 
In this section we begin the analysis of these ``connected components" in the light of the structure theorems, 2.6 of \cite{CCI} and 1.2 above.

We start with a couple of easy results valid for arbitrary theories. So $T$ is an arbitrary complete theory, ${\bar M}$ a saturated model, and $G, H,..$  definable groups in ${\bar M}$. We will also use notation such as $G_{A}^{0}$, $G_{A}^{00}$, $G_{A}^{000}$ for $A$ a small set of parameters of which $G$ is defined, where for example $G_{A}^{00}$ denotes the smallest type-definable over $A$ subgroup of $G$ of bounded index. In general $G_{A}^{00}$ may depend on $A$, but in the $o$-minimal context (more generally in any theory with $NIP$), these connected components do not depend on $A$ (i.e. remain unchanged as $A$ grows). 

\begin{Lemma} \label{image}
Let $G, H$ be $\emptyset$-definable groups, and $f:G\to H$ a $\emptyset$-definable surjective homomorphism. Then
\begin{enumerate}

\item[$(i)$] $f(G^{0}_{\emptyset}) = H_{\emptyset}^{0}$, $f(G_{\emptyset}^{00}) =  H_{\emptyset}^{00}$, and $f(G_{\emptyset}^{000}) =
H_{\emptyset}^{000}$.

\item[$(ii)$] Suppose moreover that $\ker(f)$ is finite, $G = G_{\emptyset}^{0}$, and $H = H_{\emptyset}^{00} = H_{\emptyset}^{000}$. Then $G = G_{\emptyset}^{00} = G_{\emptyset}^{000}$.

\end{enumerate}
\end{Lemma}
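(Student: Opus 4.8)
\emph{Strategy for (i).} The plan is to prove each of the three equalities by establishing two inclusions, where the ``$\subseteq$'' inclusions are all the same soft preimage argument. Given a finite-index $\emptyset$-definable (resp.\ type-definable of bounded index, resp.\ $Aut({\bar M}/\emptyset)$-invariant of bounded index) subgroup $P\leq H$, its full preimage $f^{-1}(P)$ is of the same kind in $G$: preimage under the $\emptyset$-definable map $f$ preserves definability, type-definability and $Aut({\bar M}/\emptyset)$-invariance, and since $f$ is onto with $\ker(f)\subseteq f^{-1}(P)$ we have $[G:f^{-1}(P)]=[H:P]$. Hence $f^{-1}(H^{0}_\emptyset)\supseteq G^{0}_\emptyset$, $f^{-1}(H^{00}_\emptyset)\supseteq G^{00}_\emptyset$, $f^{-1}(H^{000}_\emptyset)\supseteq G^{000}_\emptyset$, and applying $f$ gives $f(G^{?}_\emptyset)\subseteq H^{?}_\emptyset$ in all three cases. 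For the reverse inclusion the $G^{000}$ case is immediate: $f(G^{000}_\emptyset)$ is a normal subgroup of $H$ of bounded index which is $Aut({\bar M}/\emptyset)$-invariant, because $f$ is $\emptyset$-definable and the image of an invariant set under an $\emptyset$-definable map is invariant; minimality of $H^{000}_\emptyset$ then forces $f(G^{000}_\emptyset)\supseteq H^{000}_\emptyset$, giving equality.

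\emph{The crux: ``$\supseteq$'' for $G^{0}$ and $G^{00}$.} For these the issue is that $f$ need not commute with infinite intersections, so I first show the image is again of the right logical form. Write $G^{00}_\emptyset=\bigcap_{i\in I}X_i$ with each $X_i$ an $\emptyset$-definable set and the family directed (closed under finite intersection). The key step is to prove $f\bigl(\bigcap_i X_i\bigr)=\bigcap_i f(X_i)$. One inclusion is trivial; for the other, take $y\in\bigcap_i f(X_i)$, so that each $f^{-1}(y)\cap X_i$ is a nonempty \emph{definable} set, and these form a downward-directed family. By saturation their intersection $f^{-1}(y)\cap G^{00}_\emptyset$ is nonempty, i.e.\ $y\in f(G^{00}_\emptyset)$. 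Since each $f(X_i)$ is $\emptyset$-definable (the image of a definable set under a definable map), this exhibits $f(G^{00}_\emptyset)=\bigcap_i f(X_i)$ as a type-definable over $\emptyset$ subgroup of bounded index, whence $f(G^{00}_\emptyset)\supseteq H^{00}_\emptyset$ by minimality, and equality follows. The identical computation with the $X_i$ ranging over finite-index $\emptyset$-definable subgroups handles $G^{0}$: there $f(G^{0}_\emptyset)=\bigcap_i f(X_i)$ is an intersection of finite-index $\emptyset$-definable subgroups of $H$, so it automatically contains $H^{0}_\emptyset$, and together with the preimage inclusion this gives $f(G^{0}_\emptyset)=H^{0}_\emptyset$.

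\emph{Part (ii).} Assuming (i), I would argue as follows. Since $H=H^{000}_\emptyset$, part (i) gives $f(G^{000}_\emptyset)=H^{000}_\emptyset=H=f(G)$, so $f^{-1}(f(G^{000}_\emptyset))=G^{000}_\emptyset\cdot\ker(f)=G$; as $\ker(f)$ is finite, $[G:G^{000}_\emptyset]\leq|\ker(f)|<\infty$. Now I invoke the fact that a finite-index subgroup that is invariant over a small set is definable (its coset equivalence relation is a small-invariant equivalence relation with finitely many classes, hence type-definable, hence definable). Thus $G^{000}_\emptyset$ is a finite-index definable subgroup of $G$; but $G=G^{0}_\emptyset$ has no proper finite-index definable subgroup, so $G^{000}_\emptyset=G$, and then $G\geq G^{00}_\emptyset\geq G^{000}_\emptyset=G$ forces $G=G^{00}_\emptyset=G^{000}_\emptyset$ (one could instead first run the same index computation with $f(G^{00}_\emptyset)=H=f(G)$ and the ``finite-index type-definable $\Rightarrow$ definable'' version).

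\emph{Main obstacle.} The genuinely nonformal point is the reverse inclusion for $G^{00}$ (and $G^{0}$): that $f$ commutes with the directed intersection defining the component. This is precisely where saturation is needed and where definability of the fibres $f^{-1}(y)$ is essential—it would fail for a general type-definable set in place of $G^{00}_\emptyset$. The rest is bookkeeping, the only other imported ingredient being the ``finite-index invariant subgroups are definable'' fact used in (ii).
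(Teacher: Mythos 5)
Your part (i) is correct, and it is exactly the argument the paper suppresses with ``$(i)$ is clear'': the preimage observation gives all three inclusions $f(G^{?}_{\emptyset})\subseteq H^{?}_{\emptyset}$, invariance of images under $\emptyset$-definable maps handles the $000$ case, and your saturation argument that $f$ commutes with a small directed intersection of definable sets (valid since the defining families have size at most $|L|+\aleph_{0}$, and the fibres $f^{-1}(y)\cap X_{i}$ are definable) is the standard way to see that $f(G^{00}_{\emptyset})$ and $f(G^{0}_{\emptyset})$ are again of the right logical form, so that minimality applies. Nothing to object to there.

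In part (ii) you reorganize the paper's proof: the paper first shows $G=G^{00}_{\emptyset}$ using the \emph{easy} compactness fact that a type-definable subgroup of finite index is definable (hence $\emptyset$-definable by invariance, hence contains $G^{0}_{\emptyset}=G$), and only then handles $G^{000}_{\emptyset}$, citing Lemma 3.9(1) of Gismatullin--Newelski for the step from ``finite index in $G=G^{00}$'' to equality. You instead attack $G^{000}_{\emptyset}$ directly via ``finite-index invariant subgroups are definable'', which indeed subsumes the $00$ case; that fact is true, and it is in substance the very Gismatullin--Newelski lemma the paper cites. The one genuine flaw is your parenthetical justification of it: it is \emph{false} that an invariant equivalence relation with finitely many classes is automatically type-definable. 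For a counterexample, let $p$ be a nonisolated complete type over $\emptyset$ and let $E$ be the $\emptyset$-invariant equivalence relation with the two classes $p(\bar{M})$ and its complement; if $E$ were type-definable, each class would be type-definable over a parameter, making both $p(\bar{M})$ and its complement type-definable, i.e.\ $p$ isolated. So the finitely-many-classes principle fails in general, and the coset structure is precisely what rescues it for invariant subgroups --- but that rescue is the nontrivial content of Gismatullin--Newelski (resting on Newelski's theorem on Lascar strong types of finite diameter), not a one-line reduction. Your proof becomes correct if you simply replace the parenthetical sketch by the citation; alternatively, your own suggested variant (run the index computation with $f(G^{00}_{\emptyset})=H$ and use ``finite-index \emph{type-definable} implies definable'', which \emph{is} an easy compactness exercise) recovers $G=G^{00}_{\emptyset}$ without any heavy input, after which the $000$ equality still needs the cited lemma.
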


\begin{proof}
$(i)$ is clear. 
\newline
$(ii)$ We first show that $G_{\emptyset}^{00} = G$. By $(i)$ and our hypothesis, $f(G_{\emptyset}^{00}) = H$ so as $\ker(f)$ is finite, $G_{\emptyset}^{00}$ has finite index in $G$, but then $G_{\emptyset}^{00}$ is $\emptyset$-definable, so equals $G_{\emptyset}^{0}$ which by hypothesis equals $G$. 
\newline
Likewise $G_{\emptyset}^{000}$ has finite index in $G = G_{\emptyset}^{00}$, so by Lemma 3.9 (1) of \cite{Gismatullin-Newelski} we get equality. 
\end{proof}

\begin{Lemma} \label{product}
Suppose that $G$ is $\emptyset$-definable, and $H, K$ are $\emptyset$-definable subgroups of $G$ which commute with each other. Then 

\[
G_{\emptyset}^{0} = H_{\emptyset}^{0}\cdot K_{\emptyset}^{0} \qquad
G_{\emptyset}^{00} = H_{\emptyset}^{00}\cdot K_{\emptyset}^{00} \qquad  
G_{\emptyset}^{000} = H_{\emptyset}^{000}\cdot K_{\emptyset}^{000}. 
\] 
\end{Lemma}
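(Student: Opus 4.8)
The plan is to realise $G$ as a homomorphic image of the \emph{direct} product $H\times K$ and then transport the behaviour of connected components under surjections supplied by Lemma~\ref{image}. Since $H$ and $K$ commute, the multiplication map $m\colon H\times K\to G$, $(h,k)\mapsto hk$, is a $\emptyset$-definable homomorphism whose image is the subgroup $H\cdot K = G$; thus $m$ is surjective. Applying Lemma~\ref{image}$(i)$ to $m$ gives $m\big((H\times K)^{\star}_{\emptyset}\big)=G^{\star}_{\emptyset}$, where $\star$ denotes any one of the superscripts $0,00,000$. Hence it suffices to establish, for the direct product, the three identities $(H\times K)^{\star}_{\emptyset}=H^{\star}_{\emptyset}\times K^{\star}_{\emptyset}$: applying $m$ then yields $G^{\star}_{\emptyset}=H^{\star}_{\emptyset}\cdot K^{\star}_{\emptyset}$, as required.

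Each product identity I would prove by double inclusion, uniformly in $\star$. For $\subseteq$, note that $H^{\star}_{\emptyset}\times K^{\star}_{\emptyset}$ is a subgroup of $H\times K$ of exactly the kind that defines $(H\times K)^{\star}_{\emptyset}$: it is $\emptyset$-definable of finite index when $\star=0$ (and definably connected), type-definable over $\emptyset$ of bounded index when $\star=00$, and $Aut({\bar M}/\emptyset)$-invariant of bounded index when $\star=000$. In each case its index in $H\times K$ is $[H:H^{\star}_{\emptyset}]\cdot[K:K^{\star}_{\emptyset}]$, which is finite (resp. bounded). By minimality of $(H\times K)^{\star}_{\emptyset}$ among subgroups of that kind we obtain $(H\times K)^{\star}_{\emptyset}\subseteq H^{\star}_{\emptyset}\times K^{\star}_{\emptyset}$.

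The reverse inclusion is the real point. Here I would intersect $(H\times K)^{\star}_{\emptyset}$ with the $\emptyset$-definable factor $H\times\{e\}$ and set $N:=(H\times K)^{\star}_{\emptyset}\cap(H\times\{e\})$. Being an intersection of a subgroup of the relevant kind with a $\emptyset$-definable subgroup, $N$ is again of that kind (definable, type-definable, or invariant, respectively), and since $(H\times\{e\})/N$ embeds into $(H\times K)/(H\times K)^{\star}_{\emptyset}$ it has finite (resp. bounded) index in $H\times\{e\}$. Transporting through the $\emptyset$-definable isomorphism ${\bar M}\models H\cong H\times\{e\}$ and using minimality inside this factor gives $H^{\star}_{\emptyset}\times\{e\}\subseteq N\subseteq(H\times K)^{\star}_{\emptyset}$; symmetrically $\{e\}\times K^{\star}_{\emptyset}\subseteq(H\times K)^{\star}_{\emptyset}$. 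Multiplying these two subgroups yields $H^{\star}_{\emptyset}\times K^{\star}_{\emptyset}\subseteq(H\times K)^{\star}_{\emptyset}$, which together with the first inclusion completes the proof. The only delicate part I anticipate is the bookkeeping that each of the three notions is stable both under intersection with the $\emptyset$-definable factors and under the images appearing in Lemma~\ref{image}; once this is checked the argument runs identically for $0$, $00$ and $000$.
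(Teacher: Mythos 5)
Your proposal is correct and takes essentially the same route as the paper: reduce to the direct product via the multiplication homomorphism and Lemma~\ref{image}$(i)$, prove $(H\times K)^{\star}_{\emptyset}=H^{\star}_{\emptyset}\times K^{\star}_{\emptyset}$ by minimality for one inclusion and by intersecting with the factors for the other, then push forward. The only harmless slip is calling $H^{0}_{\emptyset}\times K^{0}_{\emptyset}$ ``$\emptyset$-definable of finite index'' in the $\star=0$ case — in an arbitrary theory it is only an intersection of such subgroups — but the minimality argument goes through verbatim with that reading.
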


\begin{proof} Again this is straightforward. We briefly describe the $000$ case.
\newline
First we show that $(H\times K)_{\emptyset}^{000} = H_{\emptyset}^{000}\times K_{\emptyset}^{000}$. Note that the right hand side is $Aut({\bar M})$-invariant of bounded index in $H\times K$ so contains the left hand side. But the intersection of the left hand side with $H$ has bounded index in $H$ and is invariant, so {\em contains}  $H_{\emptyset}^{00}$ so, by the previous sentence equals $H_{\emptyset}^{00}$. Likewise the left hand side intersect $K$ equals $K_{\emptyset}^{000}$. This shows the required equality.  
\newline
Now $G$ is the image of $H\times K$ under a canonical $\emptyset$-definable surjective homomorphism $f$. By Lemma \ref{image} and the first paragraph, 
$G_{\emptyset}^{000} = 
f(H_{\emptyset}^{000}\times K_{\emptyset}^{000})$ which clearly equals the internal product  $H_{\emptyset}^{000}\cdot K_{\emptyset}^{000}$. 
\end{proof}

\vspace{5mm}
\noindent
We now specialize to the case where $T$ is an $o$-minimal expansion of $RCF$, so ${\bar M} = (K,+,\cdot,..)$ for a saturated real closed field $K$. We just talk about $G^{0}, G^{00}, G^{000}$ and need not worry about parameters. 
%In this case it is known that any definable group is definably connected by finite, namely $G^{0}$ is definable and of finite index in $G$ and we usually restrict to the case of definably connected groups. ???

\begin{Lemma} \label{compact} 
Suppose $G$ is definably compact. Then $G^{00} = G^{000}$.
\end{Lemma}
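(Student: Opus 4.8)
The plan is to reduce to the definably connected case and then exploit the fact that definably compact groups have finitely satisfiable generics (fsg). First I would note that we may assume $G = G^{0}$: since $G^{000} \leq G^{00} \leq G^{0}$ and $G^{0}$ is again definably compact, neither the subgroups $G^{00}$, $G^{000}$ nor the question of their equality changes on passing from $G$ to $G^{0}$. The inclusion $G^{000} \leq G^{00}$ is automatic (type-definable implies invariant), so everything comes down to proving $G^{00} \leq G^{000}$.

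The main tool I would invoke is the theorem of Hrushovski--Peterzil--Pillay that a definably connected definably compact group in an $o$-minimal expansion of $RCF$ has fsg, and therefore carries a unique left-invariant Keisler measure $\mu$; moreover $\mu$ is two-sided invariant and generically stable, it pushes forward to the Haar measure on the compact Lie group $G/G^{00}$, and $G$ is compactly dominated by $G/G^{00}$. The strategy is then to use the rigidity supplied by $\mu$ --- its uniqueness as an invariant measure together with generic stability --- to show that \emph{every} bounded index, $Aut({\bar M}/A)$-invariant subgroup $H$ of $G$ must already contain $G^{00}$. Applying this with $H = G^{000}$ yields $G^{00} \leq G^{000}$ and hence equality. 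One concrete route to the key implication is to characterize $G^{00}$ in terms of $\mu$ (e.g.\ via a suitable stabilizer of $\mu$ in the space of measures, following Hrushovski--Pillay) and to show this object is contained in any such $H$.

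The step I expect to be the main obstacle is precisely this last implication, namely passing from measure-rigidity to a statement about the merely invariant (rather than type-definable) subgroup $G^{000}$. The difficulty is that the natural inputs --- uniqueness of $\mu$, generic stability, and compact domination of $G$ by $G/G^{00}$ --- are all phrased in terms of \emph{definable} sets, whereas $G^{000}$ need not be type-definable, so these statements do not apply to it directly. One must instead leverage the genericity of $\mu$ and its two-sided invariance to control how the cosets of $G^{000}$ meet the cosets of $G^{00}$, thereby ruling out any nontrivial element of $G^{00}/G^{000}$. If one prefers to avoid reconstructing this argument, the implication ``fsg $\Rightarrow G^{00} = G^{000}$'' can be quoted directly, after which the result follows from the first two steps alone.
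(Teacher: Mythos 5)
Your fallback option is in fact the paper's entire proof: the paper simply quotes Lemma 5.6 of \cite{NIPII}, which states that if $G$ has a global $f$-generic type then $G^{00}=G^{000}$, together with the remark preceding that lemma that a definably compact group in an $o$-minimal theory has a global $f$-generic type. So if you quote the implication rather than reconstruct it, your argument is essentially the paper's. Two caveats. First, your primary route through the unique invariant Keisler measure does not close, as you yourself concede: uniqueness of $\mu$, generic stability, and compact domination are statements about definable sets and do not directly see the merely invariant subgroup $G^{000}$, and the missing step --- that measure/generic rigidity forces every bounded-index invariant subgroup to contain $G^{00}$ --- is precisely the content of the Hrushovski--Pillay lemma, not something your sketch supplies; so that portion of the proposal should be regarded as motivation, not proof. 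Second, you quote the implication in the form ``fsg $\Rightarrow G^{00}=G^{000}$,'' whereas \cite{NIPII} phrases it via $f$-generic types; this is harmless, since in an fsg group the global generic types are $f$-generic (each translate of a formula in such a type is finitely satisfiable in a small model, hence non-forking), but the cleaner citation is the $f$-generic version, which is also what the paper uses. Your preliminary reduction to $G=G^{0}$ is correct but unnecessary, as the quoted lemma does not require definable connectedness and in any case $G^{00}=(G^{0})^{00}$ and $G^{000}=(G^{0})^{000}$.
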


\begin{proof} This follows from Lemma 5.6 of \cite{NIPII} and the paragraph before it. Namely the quoted lemma says that if $G$ has a global ``$f$-generic type" then $G^{00} = G^{000}$, and the paragraph before it says that a definably compact group (in an $o$-minimal theory) has a global $f$-generic type. 
\end{proof}

\begin{Lemma} \label{torsion-free} 
Suppose $G$ is torsion-free. Then $G = G^{00} = G^{000}$. 
\end{Lemma}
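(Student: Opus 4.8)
The plan is to reduce to the one-dimensional case using the structure theory of torsion-free definable groups, and then to push the conclusion up a normal series via the image lemma (Lemma~\ref{image}). First I would recall that a torsion-free definable group $G$ in an $o$-minimal expansion of $RCF$ is definably connected (so $G=G^0$) and solvable, and admits a definable normal series $1 = G_0 \triangleleft G_1 \triangleleft \cdots \triangleleft G_k = G$ whose successive quotients $G_{i+1}/G_i$ are definably isomorphic to $(K,+)$; this is part of the standard analysis of solvable and torsion-free $o$-minimal groups (cf. \cite{CCI}, \cite{HPP}).

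Second, I would isolate an extension step: if $N \triangleleft G$ is definable with $N = N^{00} = N^{000}$ and the quotient $Q = G/N$ satisfies $Q = Q^{00} = Q^{000}$, then $G = G^{00} = G^{000}$. Letting $f : G \to Q$ be the quotient map, Lemma~\ref{image} gives $f(G^{00}) = Q^{00} = Q$, so $G^{00}\cdot N = G$; on the other hand $G^{00} \cap N$ is a type-definable subgroup of bounded index in $N$, hence contains $N^{00} = N$, so $N \subseteq G^{00}$ and therefore $G^{00} = G$. The identical argument with ``type-definable'' replaced by ``$\mathrm{Aut}(\bar M/A)$-invariant'' (using that these components do not depend on the parameter set $A$ in the $o$-minimal setting) gives $G^{000} = G$. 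Applying this inductively along the series reduces the lemma to the single claim that $(K,+) = (K,+)^{00} = (K,+)^{000}$.

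The base case is where the real content lies. For $(K,+)^{00}$ I would argue as follows: since $K$ is divisible and each map $x \mapsto nx$ is a $\emptyset$-definable automorphism of $(K,+)$, minimality of $(K,+)^{00}$ forces $n\cdot(K,+)^{00} = (K,+)^{00}$, so $(K,+)^{00}$ is divisible; together with torsion-freeness this makes the quotient $(K,+)/(K,+)^{00}$ torsion-free. By the solution of Pillay's conjecture in the $o$-minimal case (\cite{HPP}) this quotient is a compact connected abelian Lie group of dimension at most $\dim(K,+) = 1$, hence a torus; being torsion-free it must be trivial, so $(K,+)^{00} = (K,+)$. For $(K,+)^{000}$ the same divisibility argument shows $(K,+)/(K,+)^{000}$ is torsion-free, but now the quotient carries only the logic topology and is a compact group rather than a Lie group, so the torus argument is unavailable; I would instead invoke that an abelian (indeed solvable) definable group has $G^{00} = G^{000}$, which combined with $(K,+)^{00} = (K,+)$ yields $(K,+)^{000} = (K,+)$.

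The main obstacle is precisely this last point, the equality $(K,+)^{00} = (K,+)^{000}$ at the base of the induction. This is exactly the phenomenon the paper is built around: $G^{00}/G^{000}$ can be nontrivial, and the torus argument that so cleanly kills $G/G^{00}$ has no analogue for $G^{000}$, since $G/G^{000}$ need only be a compact group (a priori solenoid-like) and not a Lie group. The resolution must use that a torsion-free group is solvable and so has no simple non-compact content to encode a universal cover; establishing the clean statement that abelian definable groups satisfy $G^{00} = G^{000}$ (equivalently, that $(K,+)$ has no proper $\mathrm{Aut}(\bar M/A)$-invariant subgroup of bounded index) is the step I expect to require the most care, and is what ultimately distinguishes the torsion-free case from the general situation studied later in the paper.
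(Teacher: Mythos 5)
There is a genuine gap, and it sits exactly where you flag it: the equality $G^{000}=G$ in the one-dimensional base case. Your proof of $(K,+)^{00}=(K,+)$ via divisibility is fine, but for $G^{000}$ you ``invoke that an abelian (indeed solvable) definable group has $G^{00}=G^{000}$'' --- and this is not a fact you can cite; it is (a special case of) what needs proving. At the time of this paper no such general result existed (the theorem that definably amenable NIP groups satisfy $G^{00}=G^{000}$ came years later), and Lemma~\ref{compact} covers only the definably compact case, which $(K,+)$ is not. So your induction bottoms out on an unproved claim, as your own final paragraph concedes. The idea you are missing is the paper's use of an \emph{invariant $f$-generic type}: by \cite{Razenj} a one-dimensional torsion-free definably connected group is an open interval with continuous group operation, and the global type $p$ at ``$+\infty$'' is definable and $G$-invariant, hence $f$-generic; Lemma 5.6 of \cite{NIPII} then gives $G=\mathrm{Stab}(p)=G^{00}=G^{000}$ in one stroke, handling both connected components simultaneously and making the divisibility/torus argument unnecessary. (For $(K,+)$ concretely, the type at $+\infty$ is translation-invariant and definable --- this one-line observation is the entire base case.)

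Two secondary inaccuracies. First, your structural claim is too strong: the successive quotients in the normal series are \emph{not} in general definably isomorphic to $(K,+)$ --- already in a pure real closed field, $(K^{>0},\cdot)$ is torsion-free, one-dimensional, and not definably isomorphic to $(K,+)$. What \cite{PeSta} actually provides, and what the paper uses, is only a definable normal subgroup $H$ with $G/H$ one-dimensional and torsion-free; your divisibility argument does adapt to this weaker hypothesis (one-dimensional torsion-free definably connected groups are abelian and divisible), so this is repairable. Second, citing ``the solution of Pillay's conjecture'' for the fact that $(K,+)/(K,+)^{00}$ is a compact Lie group is a misattribution: Pillay's conjecture concerns definably compact groups, whereas the Lie-group property of $G/G^{00}$ for arbitrary definable $G$ in an o-minimal structure comes from the descending chain condition results; the fact itself is true, and it is genuinely needed, since a compact Hausdorff torsion-free divisible abelian group that is not Lie (e.g.\ a solenoid-type group) need not be trivial. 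Your inductive extension step, by contrast, is correct and is essentially the paper's own argument via Lemma~\ref{image}.
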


\begin{proof} This is proved by induction on the dimension of $G$. 

Suppose $\dim(G) = 1$. From results in \cite{Razenj}, we may assume that $G$ is an open interval in  $1$-space with continuous group operation. The global type at ``$+\infty$" is both definable and $G$-invariant. In particular $p$ is $f$-generic, so we can again apply Lemma 5.6 of \cite{NIPII}.

Suppose now $\dim(G) > 1$.
By \cite{PeSta} there is a normal definable
subgroup  $H$ of $G$ such that $G/H$ is $1$-dimensional (and torsion-free), and the induction hypothesis applies to $H$. So $H = H^{00} = H^{000}$, whereby each of $G^{00}$ and $G^{000}$ contain $H$. By Lemma \ref{image}$(i)$ and the $1$-dimensional case, we see that $G = G^{00} = G^{000}$.
\end{proof}

\begin{Lemma} \label{almsimple}
Suppose $G$ is definably connected and definably almost simple. If $G$ is not definably compact then $G = G^{00} = G^{000}$.
\end{Lemma}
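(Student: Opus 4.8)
The plan is to reduce to the centreless case and then exploit that a definably simple non-definably-compact group is generated by conjugates of a definable torsion-free subgroup, to which Lemma~\ref{torsion-free} applies.

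First I would pass to $H = G/Z(G)$. Since $G$ is definably almost simple, $Z(G)$ is finite and $H$ is definably simple; moreover $H$ is not definably compact, since a group with finite centre is definably compact if and only if its central quotient is. If I can show $H = H^{00} = H^{000}$, then applying Lemma~\ref{image}$(ii)$ to the quotient map $f : G \to H$ (whose kernel $Z(G)$ is finite, with $G = G^{0}$ by definable connectedness) yields $G = G^{00} = G^{000}$. So it suffices to treat the centreless case.

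Next, for $H$ definably simple and not definably compact, I would invoke the structure theory of such groups (Peterzil--Pillay--Starchenko): $H$ is definably isomorphic to the definably connected component of the $K$-points of a simple algebraic group over $K$. In particular $H$ contains a definable unipotent subgroup $U$ --- for instance the unipotent radical of a Borel, definably isomorphic to $(K^{n},+)$ and hence torsion-free --- whose conjugates generate $H$ (by the Bruhat decomposition, $\langle U, U^{-}\rangle = H$ for an opposite unipotent $U^{-}$, which is itself a conjugate of $U$). By Lemma~\ref{torsion-free}, $U = U^{00} = U^{000}$, so $U \subseteq H^{000} \subseteq H^{00}$. Since $H^{00}$ and $H^{000}$ are normal in $H$, they contain every conjugate $gUg^{-1}$, and hence the subgroup generated by all these conjugates, which is $H$. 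Therefore $H = H^{00} = H^{000}$, which by the previous paragraph completes the argument.

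The main obstacle is the structural input in the third step: that $H$ is generated by conjugates of a single definable torsion-free subgroup. This is exactly where the classification of definably simple non-definably-compact groups, together with the root/Bruhat structure of simple algebraic groups over real closed fields, is essential; once it is in hand, the remainder is formal manipulation of the (normal) connected components via Lemmas~\ref{image} and~\ref{torsion-free}. A minor point to check is that the chosen unipotent subgroup is genuinely torsion-free, which is immediate in characteristic zero since one-parameter unipotent subgroups are isomorphic to $(K,+)$.
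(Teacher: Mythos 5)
Your proposal is correct in overall strategy, and the reduction step is exactly the paper's: the paper likewise passes to $H = G/Z(G)$ (finite centre, $H$ definably simple and not definably compact) and concludes via Lemma \ref{image}$(ii)$. Where you genuinely diverge is the centreless case: the paper finishes in one line by citing Corollary 6.3 of \cite{PPSIII}, which says $H$ is simple \emph{as an abstract group}, so $H^{00}$ and $H^{000}$, being normal and nontrivial (they have bounded index in an unbounded group), must equal $H$. You instead avoid abstract simplicity and run the classification of definably simple groups \cite{PPSI} through Lemma \ref{torsion-free}: find a definable torsion-free unipotent subgroup $U$, deduce $U \subseteq H^{000}$ (correctly: $H^{000}\cap U$ is invariant of bounded index in $U$, hence contains $U^{000}=U$), and conclude by normal generation. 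This buys a more self-contained explanation of why the invariant subgroups are everything, at the cost of heavier structural input --- though it is worth knowing that the abstract simplicity result the paper cites is itself proved from essentially these ingredients, so the two routes are cousins. Three soft spots in your structural step deserve patching. First, $H$ need not be quasi-split over $K$, so $U$ should be the unipotent radical of a \emph{minimal parabolic} (which exists precisely because the group is isotropic, i.e.\ not definably compact), not of a Borel. Second, $U$ need not be definably isomorphic to $(K^{n},+)$ as a group (it can be non-abelian, e.g.\ of Heisenberg type); but torsion-freeness is immediate in characteristic zero, and that is all Lemma \ref{torsion-free} requires. Third, and most substantively, the generation claim $\langle U, U^{-}\rangle = H$ is a Borel--Tits/Kneser--Tits style statement: over a general real closed field $K$, and for the semialgebraically connected component rather than all of $G(K)$, its justification (including that $U^{-}$ is conjugate to $U$ \emph{by an element of $H$}, via conjugacy of minimal parabolics) needs care and is not quite ``the Bruhat decomposition.'' A cleaner patch that keeps your architecture intact: by the o-minimal analogue of Zilber indecomposability, the subgroup generated by the definable family $\{\,U^{g} : g \in H\,\}$ is definable, definably connected, normal and infinite, hence equals $H$ by \emph{definable} simplicity alone; since $H^{000}$ is normal and contains $U$, it contains this subgroup, which is all your final step needs.
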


\begin{proof} Note that $Z(G)$ is finite, and $G/Z(G)$ is definably simple and not definably compact. By Corollary 6.3 of \cite{PPSIII}, 
$G/Z(G)$ is simple as an abstract group. In particular $G/Z(G) = (G/Z(G))^{00} = (G/Z(G))^{000}$, as the latter are normal nontrivial subgroups. By \ref{image}$(ii)$, $G = G^{00} = G^{000}$.
\end{proof} 

%%%%%%%%%%%%%%%%%%%%%%%%%%%%%%%%%%%%%%%%%%%%%%%%%%

We can now prove:

\begin{Proposition} \label{goodg2=g3} 
Let $G$ be a definable, definably connected group  (in a saturated model of an $o$-minimal expansion of a real closed field). Assume that $G$ has a good decomposition in the sense of Definition \ref{goodecomposition}. Then $G^{00} = G^{000}$.
\end{Proposition}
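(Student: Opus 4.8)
The plan is to reduce the statement to the quotient $\bar G = G/W$ and then to exploit the good decomposition $\bar G = C\cdot D_{3}$, handling each factor separately with the Lemmas already established.

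First I would dispose of the torsion-free radical $W$. Since $W$ is a definable subgroup of $G$, the type-definable group $G^{00}\cap W$ has bounded index in $W$, hence contains $W^{00}$; likewise the invariant group $G^{000}\cap W$ has bounded index in $W$, hence contains $W^{000}$. But $W$ is torsion-free, so Lemma \ref{torsion-free} gives $W = W^{00} = W^{000}$, and therefore $W \leq G^{000} \leq G^{00}$. Writing $f:G\to \bar G = G/W$ for the quotient map, both $G^{00}$ and $G^{000}$ are then unions of $\ker f = W$-cosets, so each equals the full preimage of its image. By Lemma \ref{image}$(i)$ these images are $\bar G^{00}$ and $\bar G^{000}$, so $G^{00}=G^{000}$ if and only if $\bar G^{00}=\bar G^{000}$. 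It thus suffices to prove the latter.

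Now I would invoke the good decomposition. By Definition \ref{goodecomposition} we may write $\bar G = C\cdot D_{3}$ as an almost direct product, where $C$ is the definably compact normal subgroup of Proposition \ref{decomposition} and $D_{3}$ is definable, definably connected, semisimple with no definably compact parts. Both factors are normal with finite intersection, so for $c\in C$, $d\in D_{3}$ the commutator $[c,d]$ lies in the finite set $C\cap D_{3}$; as $(c,d)\mapsto[c,d]$ is a definable map out of the definably connected set $C\times D_{3}$, its image is a single point, namely the identity, so $C$ and $D_{3}$ commute. Lemma \ref{product} then yields
\[
\bar G^{00} = C^{00}\cdot D_{3}^{00}, \qquad \bar G^{000} = C^{000}\cdot D_{3}^{000}.
\]
For the compact factor, Lemma \ref{compact} gives $C^{00}=C^{000}$. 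For the semisimple factor, $D_{3}$ is itself an almost direct product of definable, definably almost simple subgroups, none of which is definably compact (that is what ``no definably compact parts'' means); by Lemma \ref{almsimple} each such factor $S$ satisfies $S=S^{00}=S^{000}$, and a repeated application of Lemma \ref{product} gives $D_{3}=D_{3}^{00}=D_{3}^{000}$. Combining, $\bar G^{00}=C^{00}\cdot D_{3}=C^{000}\cdot D_{3}=\bar G^{000}$, as required.

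The bookkeeping with the Lemmas is routine; the two points needing genuine care are (i) that quotienting by the torsion-free radical $W$ is harmless, for which one needs the stronger fact $W\leq G^{000}$ and not merely $f(G^{00})=\bar G^{00}$, and (ii) that the two factors of the good decomposition actually commute, so that Lemma \ref{product} applies. I expect (i) to be the main conceptual step, since the entire reduction hinges on $W$ being absorbed into $G^{000}$; the decomposition of the semisimple $D_{3}$ into definably almost simple factors is standard $o$-minimal structure theory and poses no real obstacle. It is precisely the good decomposition hypothesis that makes this work, since it replaces the central extension $D$ of Proposition \ref{decomposition} by a genuine semisimple complement $D_{3}$, to which Lemma \ref{almsimple} applies directly.
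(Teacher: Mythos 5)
Your proof is correct and follows essentially the same route as the paper's: absorb $W$ into $G^{000}$ via Lemma \ref{torsion-free} and pull back along the quotient map using Lemma \ref{image}, then split $\bar G$ as an almost direct product of a definably compact factor and a semisimple factor with no definably compact parts, handled by Lemmas \ref{compact} and \ref{almsimple} respectively. The only cosmetic difference is that you work with the complement $D_{3}$ straight from Definition \ref{goodecomposition} (spelling out that the factors commute and that $D_{3}$ splits into almost simple pieces), whereas the paper invokes Remark \ref{goodecompositionD} to write $\bar G = C\cdot D^{\prime}$ with $D^{\prime}=[D,D]$ definable --- the same decomposition by that remark.
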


\begin{proof} Let $W$, $C$, $D$  be the constituent elements of the decomposition of $G$ given by Proposition \ref{decomposition}, and $D^{\prime} = [D, D]$ the commutator subgroup of $D$. As we are assuming that $G$ has 
a good decomposition, $D^{\prime}$ is definable (\ref{goodecompositionD}) and we have an exact sequence
\[
1\ \to\ W\ \to\ G\  \to\ C\ \cdot D^{\prime}\ \to\ 1
\]

\noindent
where $W$ is torsion-free, $C$ is definably compact, $D^{\prime}$ is semisimple with no definably compact parts, and so $C \cdot D^{\prime}$ is an almost direct product. (And all groups mentioned are definable, definably connected). Let us denote $C \cdot D^{\prime}$ by $H$.

By Lemma \ref{torsion-free}, $W = W^{00} = W^{000}$. So both $G^{00}$ and $G^{000}$ contain $W$. Hence 
\[
(*) \qquad \qquad G^{00} = \pi^{-1}(\pi(G^{00})) \quad \mbox{and} \quad G^{000} = \pi^{-1}(\pi(G^{000})). 
\]

\noindent
But by Lemma \ref{image}, 
\[
(**) \qquad \qquad \pi(G^{00}) = H^{00} \quad \mbox{and} \quad \pi(G^{000}) = H^{000}. 
\]
  
\noindent 
Moreover by Lemmas \ref{compact}, \ref{almsimple}, \ref{image}, $H^{00} = H^{000}$, so by $(*)$ and $(**)$, $G^{00} = G^{000}$. 
\end{proof}

%%%%%%%%%%%%%%%%%%%%%%%%%%%%%%%%%%%%%%%%%%%%%%%%%%%%%%%%%%%%
\vspace{2mm}
\noindent
Note of course that Proposition \ref{goodg2=g3} holds without the definably connected assumption on $G$, as for any $G$, $G^{00} = (G^{0})^{00}$ and $G^{000} = (G^{0})^{000}$. Note also that by Remark 2.9 of \cite{CCI}, we conclude that $G^{00} = G^{000}$ whenever $G$ is either linear (i.e. a definable subgroup of some $\GL_{n}(K)$) or   algebraic (i.e. of the form $H(K)$ for $H$ an algebraic group over $K$). \\

The following refinement of \ref{goodg2=g3} reduces to the group  $D$ from \ref{decomposition}. We will systematically use the notation from Proposition 1.2 in the next few results:  

\begin{Proposition} \label{g2andg3iffd2andd3}
Let $G$ be a definable group, and $D$ as in Proposition \ref{decomposition}. 
Then $G^{00}/G^{000}$ is isomorphic to $D^{00}/D^{000}$. In particular $G^{00} = G^{000}$ if and only if $D^{00} = D^{000}$. 
\end{Proposition}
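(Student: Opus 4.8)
The plan is to prove the isomorphism in two reduction steps mirroring the structure of Proposition \ref{decomposition}: first pass from $G$ to $\bar{G} = G/W$, and then from $\bar{G}$ to the factor $D$ in the almost direct product $\bar{G} = C^{\prime}\cdot D$. At each step the relevant connected components are controlled by the Lemmas \ref{image}, \ref{product}, \ref{compact}, \ref{torsion-free} already established, and the final clause of the statement is immediate once the isomorphism of quotients is in hand.

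First I would handle $W$. Since $W$ is torsion-free and definably connected, Lemma \ref{torsion-free} gives $W = W^{00} = W^{000}$. As $W$ is normal and definable, $W^{00}\subseteq G^{00}$ and $W^{000}\subseteq G^{000}$ (the intersections $G^{00}\cap W$ and $G^{000}\cap W$ have bounded index in $W$ and are respectively type-definable and invariant, so contain $W^{00}$ and $W^{000}$ by minimality), hence in fact $W\subseteq G^{000}\subseteq G^{00}$. Writing $\pi:G\to\bar{G}$ for the quotient map, Lemma \ref{image}$(i)$ gives $\pi(G^{00}) = \bar{G}^{00}$ and $\pi(G^{000}) = \bar{G}^{000}$, while $\pi^{-1}(\bar{G}^{000}) = G^{000}\cdot W = G^{000}$ since $\ker\pi = W\subseteq G^{000}$. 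Therefore $\pi$ induces an isomorphism $G^{00}/G^{000} \cong \bar{G}^{00}/\bar{G}^{000}$.

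Next I would use the almost direct product $\bar{G} = C^{\prime}\cdot D$, in which $C^{\prime}$ and $D$ commute and $C^{\prime}\cap D$ is a finite central subgroup. Since $\bar{G} = C^{\prime}\cdot D$ with the factors commuting, Lemma \ref{product} gives $\bar{G}^{00} = (C^{\prime})^{00}\cdot D^{00}$ and $\bar{G}^{000} = (C^{\prime})^{000}\cdot D^{000}$, and as $C^{\prime}$ is definably compact Lemma \ref{compact} yields $(C^{\prime})^{00} = (C^{\prime})^{000}$, so that
\[
\bar{G}^{00} = (C^{\prime})^{00}\cdot D^{00}, \qquad \bar{G}^{000} = (C^{\prime})^{00}\cdot D^{000}.
\]
I would then consider the homomorphism $D^{00}\to \bar{G}^{00}/\bar{G}^{000}$ sending $d\mapsto d\,\bar{G}^{000}$. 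It is surjective because $\bar{G}^{00} = (C^{\prime})^{00}D^{00}$ and $(C^{\prime})^{00}\subseteq\bar{G}^{000}$, and by the modular law (using $D^{000}\subseteq D^{00}$) its kernel is $D^{00}\cap\big((C^{\prime})^{00}\cdot D^{000}\big) = D^{000}\cdot\big((C^{\prime})^{00}\cap D^{00}\big)$. Thus the whole proposition reduces to the single inclusion $(C^{\prime})^{00}\cap D^{00}\subseteq D^{000}$.

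The hard part is exactly this last inclusion: a priori $D^{00}/D^{000}$ may have torsion (as the structure theory of later sections will in fact confirm), so I cannot simply argue that the finite subgroup $(C^{\prime})^{00}\cap D^{00}$ dies in the quotient $D^{00}/D^{000}$. Instead I would observe that $(C^{\prime})^{00}\cap D^{00}$ sits inside the finite central group $C^{\prime}\cap D$, hence is finite, and invoke the key external input that $(C^{\prime})^{00}$ is torsion-free. The latter comes from the analysis of definably compact groups underlying Pillay's conjecture: for definably compact definably connected $C^{\prime}$ the $n$-torsion of $C^{\prime}$ maps isomorphically onto the $n$-torsion of the compact Lie group $C^{\prime}/(C^{\prime})^{00}$ (cf.\ \cite{HPP}), so $(C^{\prime})^{00}$ contains no nontrivial torsion. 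A finite subgroup of a torsion-free group is trivial, whence $(C^{\prime})^{00}\cap D^{00} = \{1\}\subseteq D^{000}$. Consequently the homomorphism above induces an isomorphism $D^{00}/D^{000}\cong\bar{G}^{00}/\bar{G}^{000}$, and composing with the isomorphism of the second paragraph gives $G^{00}/G^{000}\cong D^{00}/D^{000}$, with the ``in particular'' clause following since one quotient is trivial iff the other is.
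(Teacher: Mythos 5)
Your proof is correct and follows the same two-step reduction as the paper's own argument: first pass to $\bar{G}=G/W$ via Lemmas \ref{torsion-free} and \ref{image}, then use Lemma \ref{product} on the almost direct product $\bar{G}=C^{\prime}\cdot D$ together with Lemma \ref{compact} to get $\bar{G}^{00}=(C^{\prime})^{00}\cdot D^{00}$ and $\bar{G}^{000}=(C^{\prime})^{00}\cdot D^{000}$. Where you genuinely go beyond the paper is the last step: the paper simply asserts from these two equalities that $\bar{G}^{00}/\bar{G}^{000}$ ``equals'' $D^{00}/D^{000}$, whereas you correctly observe that the natural surjection $D^{00}\to\bar{G}^{00}/\bar{G}^{000}$ has kernel $D^{000}\cdot\bigl((C^{\prime})^{00}\cap D^{00}\bigr)$, so one must rule out a finite discrepancy coming from the finite central subgroup $C^{\prime}\cap D$; and, as you note, this cannot be dismissed by saying finite subgroups die in the quotient, since $D^{00}/D^{000}$ can indeed have torsion (as section 3 of the paper shows). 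Your fix --- that $(C^{\prime})^{00}$ is torsion-free, a standard consequence of the torsion-point analysis for definably compact groups in \cite{HPP} and \cite{NIPI}, so that $(C^{\prime})^{00}\cap D^{00}$ is a finite subgroup of a torsion-free group and hence trivial --- is legitimate and closes the gap. In short: same skeleton as the paper, but your version makes explicit (and justifies, at the cost of one extra external input) a kernel computation that the paper's terse conclusion leaves implicit.
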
 

\begin{proof}
Let $W$, $C^{\prime}$, $D$ be as in Proposition \ref{decomposition}. 
Set $\bar{G} = G/W$. 

By Lemmas
\ref{torsion-free} and \ref{image} the quotient map induces isomorphisms between $G/G^{00}$ and ${\bar G}/{\bar G}^{00}$ and between $G/G^{000}$ and 
${\bar G}/{\bar G}^{000}$. So we may work with ${\bar G}$ in place of $G$. By  \ref{product}, ${\bar G}^{00} = (C^{\prime})^{00}\cdot D^{00}$ and ${\bar G}^{000} = (C^{\prime})^{000}\cdot D^{000}$. By Lemma \ref{compact} $(C^{\prime})^{00} = (C^{\prime})^{000}$. Hence 
${\bar G}^{00}/{\bar G}^{000}$ equals $D^{00}/D^{000}$.
\end{proof}

We will give a reasonably complete description of $D^{00}/D^{000}$ in section 3 which will be  elaborated on in section 5.

For the rest of this section we will make some further observations about the various quotients in the general case. Note to begin with that any map between bounded hyperdefinable sets induced by a type-definable map has to be continuous with respect to the relevant ``logic topologies".
\begin{Proposition} \label{goverg2}
(i) $G/G^{00}$ is isomorphic (as a compact topological group) to $C/({\bar G}^{00} \cap C)$.
\newline
(ii) $G/G^{000}$ is isomorphic to  $C/({\bar G}^{000}\cap C)$.
\end{Proposition}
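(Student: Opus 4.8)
The plan is to reduce everything to the quotient $\bar{G} = G/W$ and then to show that the definably compact normal subgroup $C$ already surjects onto $\bar{G}/\bar{G}^{00}$ and onto $\bar{G}/\bar{G}^{000}$. First I would observe, exactly as in the proof of Proposition \ref{g2andg3iffd2andd3}, that since $W$ is torsion-free we have $W = W^{00} = W^{000}$ by Lemma \ref{torsion-free}; hence $W \leq G^{00}$ and $W \leq G^{000}$, so by Lemma \ref{image}$(i)$ the quotient map $G \to \bar{G}$ induces topological isomorphisms $G/G^{00} \cong \bar{G}/\bar{G}^{00}$ and $G/G^{000} \cong \bar{G}/\bar{G}^{000}$. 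Thus it suffices to prove $\bar{G}/\bar{G}^{00} \cong C/(\bar{G}^{00}\cap C)$ and $\bar{G}/\bar{G}^{000} \cong C/(\bar{G}^{000}\cap C)$.

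The heart of the argument is the claim that $C\cdot \bar{G}^{00} = \bar{G}$ and $C\cdot \bar{G}^{000} = \bar{G}$. Let $q : \bar{G} \to \bar{G}/C = D_{2}$ be the quotient map, where $D_{2}$ is semisimple with no definably compact parts. I would first record that $D_{2} = D_{2}^{00} = D_{2}^{000}$: writing $D_{2}$ as an almost direct product of its definably almost simple (hence, here, non-definably-compact) factors, each factor $S$ satisfies $S = S^{00} = S^{000}$ by Lemma \ref{almsimple}, and then $D_{2} = D_{2}^{00} = D_{2}^{000}$ follows from Lemma \ref{product}. Now Lemma \ref{image}$(i)$ gives $q(\bar{G}^{00}) = D_{2}^{00} = D_{2} = q(\bar{G})$; since $\ker q = C$ is normal and contained in $C\cdot\bar{G}^{00}$, this forces $C\cdot \bar{G}^{00} = \bar{G}$, and the identical argument with $000$ in place of $00$ gives $C\cdot \bar{G}^{000} = \bar{G}$.

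Finally I would apply the second isomorphism theorem to the normal subgroup $\bar{G}^{00}\trianglelefteq \bar{G}$ and the subgroup $C$: the composite $C \hookrightarrow \bar{G} \to \bar{G}/\bar{G}^{00}$ has kernel $\bar{G}^{00}\cap C$ and, by the surjectivity just established, image all of $\bar{G}/\bar{G}^{00}$, giving an abstract isomorphism $C/(\bar{G}^{00}\cap C) \cong \bar{G}/\bar{G}^{00}$, and similarly for $000$. For the topological statement, this isomorphism is induced by the definable inclusion $C\hookrightarrow\bar{G}$, so by the remark preceding the proposition it is continuous; being a continuous bijective homomorphism between compact Hausdorff groups (in the logic topology), it is automatically a homeomorphism, hence an isomorphism of compact topological groups. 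The step I expect to require the most care is the identification $C\cdot\bar{G}^{00}=\bar{G}$ --- specifically verifying that the ``no definably compact parts'' hypothesis genuinely yields $D_{2} = D_{2}^{00} = D_{2}^{000}$ through the almost-simple factorization --- whereas the topological upgrade is the usual soft compactness argument and should be routine.
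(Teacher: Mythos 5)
Your proposal is correct and takes essentially the same route as the paper's proof: reduce to $\bar{G} = G/W$ via Lemmas \ref{torsion-free} and \ref{image}, establish $\pi(\bar{G}^{00}) = D_{2}$ (equivalently $C\cdot\bar{G}^{00} = \bar{G}$) using Lemma \ref{almsimple} on the almost simple factors, and conclude by quotienting. You merely make explicit some details the paper leaves implicit, namely the appeal to Lemma \ref{product} for the almost direct product of factors, the second isomorphism theorem, and the soft compactness argument upgrading the abstract isomorphism in (i) to a topological one.
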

\begin{proof}
(i) We already remarked in the proof of Proposition 2.7 that $G/G^{00}$ is isomorphic to ${\bar G}/{\bar G}^{00}$. 
We have the exact sequence $1 \to C \to {\bar G} \to_{\pi} D_{2}$ where $D_{2}$ is semisimple (with no definably compact part). By Lemma 2.1 and Lemma 2.5, $\pi({\bar G}^{00}) = D$.  It follows that quotienting by ${\bar G}^{00}$ induces an isomorphism between $C/({\bar G}^{00} \cap C)$ and $G/G^{00}$.
\newline
The same argument yields (ii). 

\end{proof}

Recall from \cite{NIPI} that $G$ is said to have {\em very good reduction} if there is a sublanguage $L_{0}$ of the language $L$ of $T$ and an elementary substructure $M_{0}$ of ${\bar M}|L_{0}$ such that the underlying set of $M_{0}$ is $\R$ and $G$ is definable over $M_{0}$ in the reduct 
${\bar M}|L_{0}$. In this case we can speak of $G(\R)$ which will be a real Lie group. When $G$ is definably compact (and so $G(\R)$ is compact) we know from \cite{NIPI} and \cite{Peterzil-Pillay}  that $G/G^{00}$ is isomorphic (as a compact topological group) to  $G(\R)$, via the standard part map. One can ask what happens in the general case. 
In Theorem 6 of \cite{Iwasawa} the existence of {\em maximal compact subgroups} of any connected Lie group is proved,  as well as the connectedness and conjugacy of these maximal compacts, and a further decomposition theorem.  What is especially relevant to our considerations is:

\begin{Fact} Let $G$ be a connected Lie group, and $N$ a closed normal solvable subgroup such that
\newline
(i) $G/N$ is compact,
\newline
(ii) There are closed $\{1\} = N_{0}< N_{1} < N_{2} < ... < N_{k} = N$ such that  for $i<k$, $N_{i}$ is normal in $N_{i+1}$ and the quotient $N_{i+1}/N_{i}$ is Lie isomorphic to $(\R,+)$.
\newline
Then there is a (maximal) compact subgroup $C_{1}$ of $G$, such that $C_{1}\cap N = \{1\}$ and every element of $G$ can be (uniquely) written as a product of an element of $C_{1}$ and an element of $N$.
\end{Fact}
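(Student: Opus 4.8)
The plan is to deduce this directly from the general structure theorem for maximal compact subgroups (Theorem 6 of \cite{Iwasawa}), using the two hypotheses to pin down the maximal compact as a genuine complement to $N$. By that theorem $G$ has a maximal compact subgroup $C_{1}$, which is connected, and $G$ is homeomorphic to $C_{1}\times \R^{m}$ for some $m$; I would take this $C_{1}$ as the candidate subgroup and verify the two assertions $C_{1}\cap N = \{1\}$ and $C_{1}\cdot N = G$, from which unique decomposition is immediate.

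First I would establish the sublemma that $N$ contains no nontrivial compact subgroup. This follows from hypothesis (ii) by induction on $k$: if $L$ is a compact subgroup of $N_{i+1}$, its image under the quotient $N_{i+1}\to N_{i+1}/N_{i}\cong(\R,+)$ is a compact subgroup of $\R$, hence trivial, so $L\leq N_{i}$; the base case $N_{1}\cong\R$ is clear. Applying this with $L = C_{1}\cap N$ (a compact subgroup of $N$, since $C_{1}$ is compact and $N$ is closed) yields $C_{1}\cap N = \{1\}$.

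The key step, and the one I expect to be the main obstacle, is surjectivity $C_{1}\cdot N = G$. Here I would invoke the functoriality part of Iwasawa's results (the companion ``decomposition theorem'' alluded to before the statement): for a surjective continuous homomorphism of connected Lie groups, the image of a maximal compact subgroup is a maximal compact subgroup of the target. Applying this to the projection $\pi\colon G\to G/N$, the image $\pi(C_{1})$ is a maximal compact subgroup of $G/N$. But by hypothesis (i) the group $G/N$ is compact, so it is its own (unique) maximal compact subgroup; hence $\pi(C_{1}) = G/N$, which says exactly $C_{1}\cdot N = G$. The delicate point is ensuring that this functoriality is available in the required generality, and recognizing that the hypotheses on $N$ play two distinct roles: the Euclidean filtration forces the maximal compact to meet $N$ trivially, while compactness of $G/N$ forces it to surject.

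Finally, uniqueness is formal: if $c_{1}n_{1} = c_{2}n_{2}$ with $c_{i}\in C_{1}$ and $n_{i}\in N$, then $c_{2}^{-1}c_{1} = n_{2}n_{1}^{-1}\in C_{1}\cap N = \{1\}$, so $c_{1}=c_{2}$ and $n_{1}=n_{2}$. This completes the decomposition $G = C_{1}\cdot N$ with the asserted properties.
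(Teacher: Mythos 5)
Your proof is correct, but it takes a genuinely different route from the paper's. The paper argues bottom-up: it observes that the case $k=1$ (i.e., $N\cong(\R,+)$) is exactly Lemma 3.7 of \cite{Iwasawa}, which constructs a compact complement to a one-dimensional vector subgroup when the quotient is compact, and then gets the general case by an easy induction along the filtration (apply the inductive hypothesis to $G/N_{1}$ with normal subgroup $N/N_{1}$, pull back the resulting compact complement to a connected closed subgroup $G_{1}\leq G$ in which $N_{1}$ is cocompact, and apply Lemma 3.7 once more); this is precisely why hypothesis (ii) is phrased with each successive quotient isomorphic to $(\R,+)$. You instead argue top-down: fix a maximal compact $C_{1}$ via Iwasawa's Theorem 6, kill $C_{1}\cap N$ by your (correct) sublemma that $N$ has no nontrivial compact subgroups, and obtain $C_{1}\cdot N=G$ from the fact that a surjective morphism of connected Lie groups carries maximal compacts to maximal compacts. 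That functoriality is true and classical (it appears, e.g., as Theorem 3.1 of Chapter XV of Hochschild's \emph{The Structure of Lie Groups}), but you are right to flag it as the delicate point: it is not among the statements the paper quotes from \cite{Iwasawa} (Theorem 6 gives existence, connectedness, conjugacy, and the topological decomposition of $G$ as a product of $C_{1}$ with copies of $\R$, not behaviour under quotients), so your argument requires an extra citation or proof, whereas the paper's induction needs only the one-dimensional Lemma 3.7. What your approach buys: it cleanly separates the roles of the two hypotheses (the Euclidean filtration is used only to exclude compact subgroups of $N$, so your argument works verbatim for any closed normal subgroup without nontrivial compact subgroups, e.g., any simply connected solvable group), and maximality of $C_{1}$ is automatic. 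What the paper's approach buys: self-containedness given the quoted lemma, an explicit construction of the complement, and maximality as a byproduct, since any compact $K\supseteq C_{1}$ satisfies $K=C_{1}\cdot(K\cap N)=C_{1}$.
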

\begin{proof}  This is contained in Lemma 3.7 of \cite{Iwasawa} in the case where $N = N_{1}$. The fact then follows easily by induction. 
\end{proof}

\begin{Proposition} Suppose that $G$ has very good reduction and ${\bar G}$ is definably compact (i.e. equals $C$). Then $G/G^{00}$ is isomorphic to a maximal compact subgroup of $G(\R)$.
\end{Proposition}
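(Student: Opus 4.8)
The plan is to apply the cited Fact to the Lie group $G(\R)$ with $N = W(\R)$, and then to match the resulting maximal compact subgroup with $G/G^{00}$ through the standard part map. Throughout I may assume $G$ is definably connected (since $G^{00} = (G^0)^{00}$), so that $G(\R)$ is a connected Lie group.

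First I would record the two descriptions of $G/G^{00}$ that make everything fit together. Since $G$ has very good reduction and $W$ is canonically definable (so, after possibly enlarging $L_0$, definable over $M_0$), the quotient $\bar G = G/W = C$ also has very good reduction and is definably compact. As noted in the proof of Proposition \ref{g2andg3iffd2andd3} (using $W = W^{00}$ from Lemma \ref{torsion-free} together with Lemma \ref{image}), the quotient map induces an isomorphism $G/G^{00} \cong \bar G/\bar G^{00}$; and since $\bar G$ is definably compact, \cite{NIPI} and \cite{Peterzil-Pillay} give $\bar G/\bar G^{00} \cong \bar G(\R)$ as compact topological groups, via the standard part map. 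So it suffices to produce a maximal compact subgroup $C_1 \leq G(\R)$ that projects isomorphically onto $\bar G(\R)$.

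Next I would verify the hypotheses of the Fact for $N = W(\R)$, a closed normal subgroup of $G(\R)$. Since $W$ is torsion-free and definably connected, the Peterzil--Starchenko structure theory (as used in Lemma \ref{torsion-free}, via \cite{PeSta}) yields a definable filtration $\{1\} = W_0 \triangleleft W_1 \triangleleft \cdots \triangleleft W_k = W$ whose successive quotients are $1$-dimensional, torsion-free and definably connected, hence definably isomorphic to $(K,+)$; in particular $W$ is solvable. Taking $\R$-points (the $W_i$ may be taken definable over $M_0$) gives closed subgroups $\{1\} = W_0(\R) \triangleleft \cdots \triangleleft W_k(\R) = W(\R)$ of $G(\R)$ with successive quotients Lie-isomorphic to $(\R,+)$, which is exactly condition (ii). For condition (i), the natural map $G(\R) \to \bar G(\R)$ has kernel $W(\R)$, so $G(\R)/W(\R)$ embeds in the compact group $\bar G(\R)$; I will see in the last step that it is in fact onto.

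Finally I would invoke the Fact to obtain a maximal compact subgroup $C_1$ of $G(\R)$ with $C_1 \cap W(\R) = \{1\}$ and $G(\R) = C_1 \cdot W(\R)$, and analyze the composite $p\colon C_1 \hookrightarrow G(\R) \to \bar G(\R)$. It is injective since $C_1 \cap W(\R) = \{1\}$, and its image $C_1 W(\R)/W(\R)$ is the full image of $G(\R)$ in $\bar G(\R)$, of o-minimal dimension $\dim G - \dim W = \dim \bar G = \dim_{\R}\bar G(\R)$, hence open; as $\bar G$ is definably connected, $\bar G(\R)$ is connected, so the image is all of $\bar G(\R)$. Thus $p\colon C_1 \to \bar G(\R)$ is a continuous bijective homomorphism of compact Hausdorff groups, hence a topological isomorphism. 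Combining with the first paragraph gives $G/G^{00} \cong \bar G(\R) \cong C_1$, a maximal compact subgroup of $G(\R)$. I expect the main obstacle to be precisely the transfer of the o-minimal structure to $G(\R)$: checking that the Peterzil--Starchenko filtration of $W$ descends to a filtration of $W(\R)$ by closed subgroups with $(\R,+)$-quotients, and the connectedness of $\bar G(\R)$ forcing surjectivity of $G(\R) \to \bar G(\R)$; once these are in place, the application of the Fact and the compactness argument are routine.
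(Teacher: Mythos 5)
Your proof is correct and follows essentially the same route as the paper's: identify $G/G^{00}$ with $C/C^{00}\cong C(\R)$ via the standard part map, verify the hypotheses of Fact 2.9 for $N = W(\R)$, and observe that quotienting by $W(\R)$ carries the resulting maximal compact subgroup $C_1$ isomorphically onto $C(\R)$; you merely spell out two steps the paper labels as clear (the filtration of $W(\R)$ with $(\R,+)$-quotients, and surjectivity of $G(\R)\to {\bar G}(\R)$). One harmless overstatement: the $1$-dimensional torsion-free quotients need not be \emph{definably} isomorphic to $(K,+)$ in a general o-minimal expansion of $RCF$ (e.g.\ $(K^{>0},\cdot)$ in pure $RCF$), but this does not matter, since a $1$-dimensional connected torsion-free Lie group is automatically Lie-isomorphic to $(\R,+)$, which is all that Fact 2.9 requires.
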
 
\begin{proof} We clearly have that $1 \to  W \to G \to C \to 1$ is definable in the reduct ${\bar M}|L_{0}$ over $\R$. 
By Proposition 2.8 $G/G^{00}$ is isomorphic to $C/C^{00}$ which as remarked is isomorphic to  $C(\R)$ as a compact group.  On the other hand we have the exact sequence of Lie groups:
\[
1 \to W(\R) \to G(\R) \to C(\R) \to 1. 
\]

Clearly $W(\R)$ satisfies the hypothesis on $N$ in Fact 2.9, and $C(\R)$ is compact. So by Fact 2.9, there is a compact subgroup $C_{1}$ of $G(\R)$ such that every element of $G(\R)$ can be written uniquely as a product of an element of $C_{1}$ and an element of $W(\R)$. Hence quotienting by $W(\R)$ yields an isomorphism (of Lie groups) between $C_{1}$ and $C(\R)$ which completes the proof.
\end{proof}

Note that the converse is also true: If $G$ (with very good reduction) is not definably compact then a maximal compact subgroup of $G(\R)$ will have dimension 
$> \dim(G/G^{00})$ so could not be isomorphic to $G/G^{00}$. Because, if $D_{2}$ is the semisimple with no definable compact parts, part of $G$ then the semisimple Lie group $D_{2}(\R)$ has a maximal compact subgroup, say $C_{2}$, of positive dimension. This will lift to a maximal compact $C_{3}$ of $G(\R)$ containing $C_{1}$ (where $C_{1}$ is as in the proof above). But then $\dim(C_{3}) > \dim(C_{1}) = \dim(C/C^{00}) \geq \dim(G/G^{00})$.

%%%%%%%%%%%%%%%%%%%%%%%%%%%%%%%%%%%%%%%%%%%%%%%%%%%%%%%%%%%%%%%%%%%%%%%%%%%%%%%%%%%%%%%%%%%%%%%%%%%%
\section{$D^{00}$ and $D^{000}$}
This section is devoted to the investigation of the group $D$, a definable central extension of a semisimple group with no definably compact parts by a definably compact group. In \cite{CCI} we gave examples of such groups where $D^{00} \neq D^{000}$. We will show here that $D^{00}/D^{000}$ is  abelian, 
and is moreover (naturally) isomorphic to a quotient of a commutative compact Lie group by a countable dense subgroup. This will be improved on in various ways  in section 5 where we make use of properties of universal covers of simple Lie groups.

We let $D^{\prime}$ denote the commutator (derived) subgroup of $D$, and $(D^{\prime})^{\prime}$ the second derived subgroup, namely the commutator
subgroup of $D^{\prime}$. We introduce some new notation by letting $\Gamma$ denote the connected component of 
$Z(D)$ . We have the (definable) exact sequence $$1 \to \Gamma \to D \to_{\pi} D_{2} \to 1$$ where $D_{2}$ is  definable, connected and semisimple (we allow a finite centre) with no definably compact parts.  And $\Gamma$ is definably compact (commutative). We will prove:

\begin{Theorem} \label{g2andg3}

\begin{enumerate}

\item $D^{000} = \Gamma^{00} \cdot (D^{\prime})^{\prime}$.

\item $D^{00} = E \cdot (D^{\prime})^{\prime}$, where $E$ is the smallest type-definable subgroup of $\Gamma$ containing both
$\Gamma^{00}$ and $\Gamma \cap (D^{\prime})^{\prime}$. 
\end{enumerate}
\end{Theorem}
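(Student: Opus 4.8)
We have the central extension $1 \to \Gamma \to D \to_{\pi} D_2 \to 1$ with $\Gamma = Z(D)^0$ definably compact commutative, and $D_2$ semisimple with no definably compact parts. The plan is to transport information back and forth through $\pi$, using that $D_2$ is a "good" semisimple group for which $D_2^{00} = D_2^{000} = D_2$ (this follows from Lemmas \ref{compact}, \ref{almsimple}, and \ref{image}, exactly as in the proof of Proposition \ref{goodg2=g3}). Since $\pi$ is a surjective homomorphism with $\ker(\pi) = \Gamma$, Lemma \ref{image}$(i)$ gives $\pi(D^{00}) = D_2^{00} = D_2$ and $\pi(D^{000}) = D_2^{000} = D_2$; hence both $D^{00}$ and $D^{000}$ surject onto all of $D_2$, so each of them is determined by its intersection with $\Gamma$ together with a fixed section. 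Concretely, $D^{00} = \pi^{-1}(D_2)^{00}$-style reasoning shows $D^{00}$ and $D^{000}$ both contain $(D')'$ (see below), and modulo $(D')'$ everything reduces to a computation inside the commutative group $\Gamma$.

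\emph{Why the derived subgroups enter.}
First I would locate $(D')'$ inside both $D^{000}$ and $D^{00}$. Since $D_2$ is semisimple (with finite centre), it is perfect, in fact perfect as witnessed by a bounded number of commutators; by Lemma \ref{commutator}, $D' \cap Z(D)$ is finite, so $\pi(D') = D_2' = D_2$ and similarly $\pi((D')') = D_2$. The key point is that $(D')'$ is the "semisimple core": because $D_2$ is perfect, $(D')'$ projects onto $D_2$ and its intersection with the central $\Gamma$ is controlled. I would argue that $(D')'$ is contained in $D^{000}$ (hence in $D^{00}$): since $D^{000}$ is a normal bounded-index subgroup surjecting onto the perfect group $D_2$, iterated commutators of lifts of $D_2$-elements must land in $D^{000}$, giving $(D')' \subseteq D^{000} \subseteq D^{00}$. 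This reduces both identities to identifying $D^{000} \cap \Gamma$ and $D^{00}\cap \Gamma$, since $D^{000} = (D^{000}\cap\Gamma)\cdot(D')'$ and likewise for $D^{00}$ once we know $(D')'$ surjects onto $D_2$.

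\emph{The two computations inside $\Gamma$.}
For part (1), I expect $D^{000}\cap\Gamma = \Gamma^{00}$. The inclusion $\Gamma^{00}\subseteq D^{000}$ holds because $\Gamma^{00} = \Gamma^{000}$ ($\Gamma$ is definably compact, Lemma \ref{compact}) is an $\mathrm{Aut}$-invariant bounded-index subgroup of the central $\Gamma$, so it lies in $D^{000}$. For the reverse, the central quotient $D/\Gamma^{00}$ is an extension of $D_2$ by the compact group $\Gamma/\Gamma^{00} = \Gamma/\Gamma^{000}$, and one checks this quotient has coincident $00$ and $000$ with $D^{000}$ mapping onto $(D')'\cdot\Gamma^{00}/\Gamma^{00}$; the point is that no further central part can be forced into $D^{000}$ beyond $\Gamma^{00}$. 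For part (2), $D^{00}\cap\Gamma$ must be type-definable (as $D^{00}$ is), contain $\Gamma^{00}$, and contain $\Gamma\cap(D')'$ (since $(D')'\subseteq D^{00}$); minimality of $D^{00}$ as a type-definable bounded-index subgroup forces $D^{00}\cap\Gamma = E$, the smallest type-definable subgroup of $\Gamma$ containing both.

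\emph{The main obstacle.}
The delicate step is the reverse inclusion $D^{000}\cap\Gamma \subseteq \Gamma^{00}$, i.e. showing $D^{000}$ contains no more of the centre than $\Gamma^{00}$ forces. This is exactly where the difference between $D^{00}$ and $D^{000}$ lives, so it cannot be soft: one must use that $\Gamma\cap(D')'$ can be strictly larger than $\Gamma^{00}$ (this is the phenomenon producing $D^{00}\neq D^{000}$), and argue that the \emph{invariant} hull behaves differently from the \emph{type-definable} hull. I would handle this by passing to $D/(D')'$, a central extension of $D_2/\pi((D')')$ (a finite-central quotient, still perfect) by $\Gamma/(\Gamma\cap(D')')$, and computing $000$ there directly, showing the central contribution to $D^{000}$ is precisely the image of $\Gamma^{00}$. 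Controlling $G^{000}$ (as opposed to $G^{00}$) typically requires care with commutator arguments bounding the invariant subgroup, and that is where I expect to invoke the finite-commutator-width consequences of Lemma \ref{commutator} most heavily.
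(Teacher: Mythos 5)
Your Claims I--III analogues are sound and match the paper's, but part (1) of your argument hinges on a claim that is \emph{false}: you assert $D^{000}\cap\Gamma=\Gamma^{00}$, and this contradicts the very theorem being proved. Since (as you yourself establish) $(D^{\prime})^{\prime}\subseteq D^{000}$, we get $\Gamma\cap (D^{\prime})^{\prime}\subseteq \Gamma\cap D^{000}$, and the whole point of this circle of results is that $\Gamma\cap (D^{\prime})^{\prime}$ need \emph{not} lie in $\Gamma^{00}$: in the motivating example (Theorem 3.3 of \cite{CCI}, recalled in section 4 of the paper) one has $G_{1}^{000}\cap \SO_{2}(K)=\SO_{2}(K)^{00}\cdot\Z$, of infinite index over $\SO_{2}(K)^{00}$; indeed Proposition 3.4 of the paper says $D^{00}=D^{000}$ precisely when $\Gamma^{00}$ has finite index in $\Gamma\cap D^{000}$. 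The correct statement, which follows from the theorem, is $D^{000}\cap\Gamma=\Gamma^{00}\cdot(\Gamma\cap(D^{\prime})^{\prime})$. You half-notice the tension (``$\Gamma\cap(D')'$ can be strictly larger than $\Gamma^{00}$'') without seeing that it refutes your target inclusion outright. The paper never computes $D^{000}\cap\Gamma$ directly; instead the reverse inclusion of (1) is a minimality argument in the opposite direction: $\Gamma^{00}\cdot(D^{\prime})^{\prime}$ is itself a subgroup of bounded index (since $\Gamma^{00}$ has bounded index in $\Gamma$ and $(D^{\prime})^{\prime}$ projects onto $D_{2}$) which is $Aut({\bar M}/A)$-invariant, hence it \emph{contains} $D^{000}$. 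Your proposed repair --- computing the $000$-component of $D/(D^{\prime})^{\prime}$ --- cannot work: $(D^{\prime})^{\prime}$ is only $\bigvee$-definable (a countable increasing union of definable sets), so $D/(D^{\prime})^{\prime}$ is not a definable group and ``its $G^{000}$'' has no meaning in this framework; and in any case the conclusion you want from it is false.

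Part (2) has a related gap. Minimality of $D^{00}$ only gives $D^{00}\subseteq S$ for $S$ \emph{type-definable} of bounded index, so to conclude $D^{00}\cap\Gamma\subseteq E$ you need $E\cdot(D^{\prime})^{\prime}$ to be type-definable --- and this is not obvious, again because $(D^{\prime})^{\prime}$ is merely $\bigvee$-definable. This is exactly the paper's Claim IV, and it is where finite commutator width is really used: by 3.1(v) of \cite{HPP}, $D_{2}=[D_{2},D_{2}]_{n}$ for some $n$, so the definable set $X=[[D,D]_{n},[D,D]_{n}]_{n}$ satisfies $\pi(X)=D_{2}$; then any $a\in(D^{\prime})^{\prime}$ factors as $a=cb$ with $b\in X$ and $c=ab^{-1}\in\Gamma\cap(D^{\prime})^{\prime}\subseteq E$, whence $E\cdot(D^{\prime})^{\prime}=E\cdot X$ is type-definable. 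You mention bounded commutator width early on, but you deploy it in the wrong place (trying to bound $D^{000}$ inside the centre) and never where it is actually needed. Finally, your sketch that $(D^{\prime})^{\prime}\subseteq D^{000}$ (``iterated commutators of lifts must land in $D^{000}$'') should be made precise as in the paper: set $H=D^{\prime}\cap D^{000}$, show $\pi(H)=D_{2}$ using that a bounded-index normal subgroup of $D_{2}$ is everything ($D_{2}$ being an almost direct product of groups simple modulo finite centre), deduce $D^{\prime}=(\Gamma\cap D^{\prime})\cdot H$, and use centrality of $\Gamma$ to get $(D^{\prime})^{\prime}\subseteq H$.
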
 
\begin{proof}
1.  We make a couple of claims:
\newline
{\em Claim I.}  $D^{000}$ contains $\Gamma^{00}$.
\newline
This is because $D^{000}\cap\Gamma$ has bounded index in $\Gamma$ hence contains $\Gamma^{000}$ which equals $\Gamma^{00}$ by Lemma 2.3.

\vspace{2mm}
\noindent
{\em Claim II.}  $D^{000}$ contains  $(D^{\prime})^{\prime}$.
\newline
{\em Proof of Claim II.}  Let $H =  D^{\prime}\cap D^{000}$. By Claim 3.1(v) of \cite{HPP}, $D_{2}$ is perfect, hence $\pi(D^{\prime}) = D_{2}$. Now $H$ is a 
normal subgroup of $D^{\prime}$ of bounded index (in fact index bounded by the index of $D^{000}$ in $D$ which is at most continuum). Hence $\pi(H)$ is a normal subgroup of $D_{2}$ of bounded index. But $D_{2}$ is an almost direct product of finitely many groups, each of which is simple as an abstract group (modulo a possibly finite centre). This implies that $\pi(H) = D_{2} = \pi(D^{\prime})$. Hence as $\Gamma = \ker(\pi)$, we see that $D^{\prime} = (\Gamma \cap D^{\prime})\cdot H$. Hence $(D^{\prime})^{\prime} \subseteq H \subseteq D^{000}$, proving Claim II.

\vspace{2mm}
\noindent
Note again that as $D_{2}$ is perfect  $\pi((D^{\prime})^{\prime}) = D_{2}$.
By Claims I and II, 
\newline
(*) $D^{000}\supseteq\Gamma^{00}\cdot (D^{\prime})^{\prime}$. 
\newline
But as $\Gamma^{00}$ has bounded index in $\Gamma$  and $(D^{\prime})^{\prime}$ projects onto $D_{2}$ we see that $\Gamma^{00}\cdot (D^{\prime})^{\prime}$ has bounded index in $D$ and is clearly $Aut({\bar M}/A)$ invariant where $A$ is a set of parameters over which $D$ is defined. Hence in (*) we obtain equality, yielding 1.

\vspace{2mm}
\noindent
2. We start with
\newline
{\em Claim III.}  $D^{00}$ contains $E \cdot (D^{\prime})^{\prime}$ where $E$ is as in the statement of the theorem.
\newline
{\em Proof of Claim III.} First $D^{00}$ contains $D^{000}$ which contains $(D^{\prime})^{\prime}$ by 1. Secondly $D^{00}\cap\Gamma$ contains 
$\Gamma^{00}$ as well as $D^{000}\cap\Gamma$ and as we have just seen the latter contains $(D^{\prime})^{\prime}\cap \Gamma$. But $D^{00}\cap\Gamma$ is 
also type-definable so contains the smallest type-definable subgroup of $\Gamma$ containing both $\Gamma^{00}$ and $(D^{\prime})^{\prime}\cap \Gamma$, 
which is precisely $E$. Hence $D^{00}$ contains $E \cdot (D^{\prime})^{\prime}$, proving Claim III.

\vspace{2mm}
\noindent
Clearly $E \cdot (D^{\prime})^{\prime}$ has bounded index in $D$  (it contains $D^{000}$ by 1). So bearing in mind Claim III, it suffices, to complete the proof of 2, to prove:

\vspace{2mm}
\noindent
{\em Claim IV.}  $E \cdot (D^{\prime})^{\prime}$  is type-definable. 
\newline
{\em Proof of Claim IV.} By Claim 3.1(v) of \cite{HPP}, $D_{2}$ is perfect and equals $[D_{2},D_{2}]_{n}$ for some $n$, namely every element of $D_{2}$ is a product of at most $n$ commutators. It follows that $\pi(X) = D_{2}$ where  $X = [[D,D]_{n},[D,D]_{n}]_{n}$. So for any 
$a\in (D^{\prime})^{\prime}$ there is $b\in X$ such that $\pi(a) = \pi(b)$. Hence $a = cb$ where $c = ab^{-1}\in \Gamma\cap (D^{\prime})^{\prime}$. Hence $c\in E$. So we have shown that $(D^{\prime})^{\prime}$ is contained in $E\cdot X$ (the set of products of elements of $E$ with elements of $X$). Hence $E \cdot (D^{\prime})^{\prime} = E\cdot X$. As $E$ is type-definable and $X$ is definable, $E\cdot X$ is type-definable. This completes the proof of Claim IV and so also of 2. 
\end{proof}

Following the notation in Theorem 3.1 we conclude:
\begin{Corollary} $D^{00}/D^{000}$ is isomorphic to the quotient of the compact (not necessarily connected) commutative Lie group $E/\Gamma^{00}$ by the countable dense subgroup  $(\Gamma^{00} \cdot(\Gamma \cap (D^{\prime})^{\prime}))/\Gamma^{00}$.
\end{Corollary}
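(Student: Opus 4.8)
The plan is to read the two connected components directly off Theorem \ref{g2andg3}, namely $D^{00} = E\cdot(D^{\prime})^{\prime}$ and $D^{000} = \Gamma^{00}\cdot(D^{\prime})^{\prime}$, and then to realize $D^{00}/D^{000}$ as an explicit quotient of $E$. Consider the homomorphism $\phi\colon E \to D^{00}/D^{000}$ obtained by composing the inclusion $E \hookrightarrow D^{00}$ with the quotient map. Since $(D^{\prime})^{\prime}\subseteq D^{000}$ by part 1 of Theorem \ref{g2andg3}, every element of $D^{00} = E\cdot(D^{\prime})^{\prime}$ is congruent modulo $D^{000}$ to an element of $E$, so $\phi$ is surjective. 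Its kernel is $E\cap D^{000} = E\cap(\Gamma^{00}\cdot(D^{\prime})^{\prime})$, and I would check that this equals $\Gamma^{00}\cdot(\Gamma\cap(D^{\prime})^{\prime})$: the inclusion $\supseteq$ is immediate since both factors lie in $E$ and in $\Gamma^{00}\cdot(D^{\prime})^{\prime}$; for $\subseteq$, writing $e = gx$ with $g\in\Gamma^{00}$ and $x\in(D^{\prime})^{\prime}$, the element $x = g^{-1}e$ lies in $\Gamma$ (as both $g$ and $e$ do), hence in $\Gamma\cap(D^{\prime})^{\prime}$. This yields
\[
D^{00}/D^{000}\ \cong\ E\big/\big(\Gamma^{00}\cdot(\Gamma\cap(D^{\prime})^{\prime})\big),
\]
all groups being abelian since $\Gamma = Z(D)^{0}$ is commutative.

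Next I would apply the third isomorphism theorem to rewrite the right-hand side as the quotient of $E/\Gamma^{00}$ by $(\Gamma^{00}\cdot(\Gamma\cap(D^{\prime})^{\prime}))/\Gamma^{00}$, which is exactly the description claimed. It then remains to verify the three topological and cardinality assertions. For the ambient group: $\Gamma$ is definably compact and commutative, so $\Gamma/\Gamma^{00}$ is a connected compact commutative Lie group (a torus) under the logic topology; as $E$ is type-definable and contains $\Gamma^{00}$, the subgroup $E/\Gamma^{00}$ is closed in this topology, hence is itself a compact commutative Lie group, not necessarily connected.

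For countability, note that $\Gamma\cap(D^{\prime})^{\prime}\subseteq Z(D)\cap[D,D]$, since $\Gamma\subseteq Z(D)$ and $(D^{\prime})^{\prime}\subseteq D^{\prime} = [D,D]$. As $D$ is a definably connected central extension of the semisimple group $D_{2}$, Lemma \ref{commutator} gives that $Z(D)\cap[D,D]$ is countable; hence $\Gamma\cap(D^{\prime})^{\prime}$, and a fortiori its image $(\Gamma^{00}\cdot(\Gamma\cap(D^{\prime})^{\prime}))/\Gamma^{00}$, is countable. The main point, and the step I expect to require the most care, is density. Here I would use the correspondence, in the logic topology on $\Gamma/\Gamma^{00}$, between closed subgroups and type-definable subgroups of $\Gamma$ containing $\Gamma^{00}$. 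The closure of $(\Gamma^{00}\cdot(\Gamma\cap(D^{\prime})^{\prime}))/\Gamma^{00}$ is a closed subgroup of $E/\Gamma^{00}$, so its preimage in $\Gamma$ is a type-definable subgroup containing both $\Gamma^{00}$ and $\Gamma\cap(D^{\prime})^{\prime}$; by the very definition of $E$ as the smallest such subgroup, this preimage must contain $E$, while being contained in $E$ by construction. Hence the preimage is $E$, the closure is all of $E/\Gamma^{00}$, and the subgroup is dense, completing the proof.
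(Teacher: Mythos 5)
Your proof is correct and follows essentially the same route as the paper: the density argument via the correspondence between closed subgroups of $\Gamma/\Gamma^{00}$ and type-definable subgroups of $\Gamma$ containing $\Gamma^{00}$, together with minimality of $E$, and countability via Lemma \ref{commutator}, are exactly the paper's argument. The only difference is that you carefully verify the algebraic step $D^{00}/D^{000}\cong E\big/\big(\Gamma^{00}\cdot(\Gamma\cap(D^{\prime})^{\prime})\big)$ (the kernel computation $E\cap(\Gamma^{00}\cdot(D^{\prime})^{\prime})=\Gamma^{00}\cdot(\Gamma\cap(D^{\prime})^{\prime})$), which the paper treats as immediate from Theorem \ref{g2andg3}.
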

\begin{proof}
We start by giving an explanation. $\Gamma/\Gamma^{00}$ is a (connected) compact commutative Lie group, when equipped with the logic topology (see section 3 of \cite{CCI} for a full discussion of the logic topology on bounded hyperdefinable sets and groups). The closed subgroups of $\Gamma/\Gamma^{00}$ correspond precisely to the type-definable subgroups of $\Gamma$ which contain $\Gamma^{00}$, and $E$ is an example of the latter. Now $(\Gamma^{00} \cdot(\Gamma \cap (D^{\prime})^{\prime}))/\Gamma^{00}$ is a subgroup of $E/\Gamma^{00}$, and its closure in $E/\Gamma^{00}$ is also a subgroup. Hence by the definition of $E$, the closure of 
$(\Gamma^{00} \cdot(\Gamma \cap 
(D^{\prime})^{\prime})/\Gamma^{00}$  in $E/\Gamma^{00}$ is precisely $E/\Gamma^{00}$. Also by Lemma 1.1, 
$\Gamma \cap (D^{\prime})^{\prime}$ is countable, so this proves  the Corollary.
\end{proof}

Let us remark here that $E/\Gamma^{00}$ may not be connected. We give an example in the next section where it is finite. But also note that if $E/\Gamma^{00}$ {\em is} finite then $E = D^{00} = D^{000}$. 

Using Proposition 2.7 we obtain:

\begin{Corollary} For any definable group $G$, $G^{00}/G^{000}$ is isomorphic (as an abstract group) to the quotient of a compact commutative Lie group by a countable dense subgroup. In particular $G^{00}/G^{000}$ is commutative.
\end{Corollary}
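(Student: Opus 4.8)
The plan is to deduce this corollary directly by chaining together the reduction to the group $D$ and the explicit description of $D^{00}/D^{000}$, both of which have already been established.

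First I would invoke Proposition~\ref{g2andg3iffd2andd3}, which asserts that for any definable group $G$, if $D$ is the group attached to $G$ by Proposition~\ref{decomposition}, then $G^{00}/G^{000}$ is isomorphic to $D^{00}/D^{000}$. This transports the entire question to the case of a central extension of a semisimple group with no definably compact parts by a definably compact group, which is exactly the setting treated in Theorem~\ref{g2andg3}.

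Next I would apply the preceding Corollary, which identifies $D^{00}/D^{000}$ with the quotient of the compact commutative Lie group $E/\Gamma^{00}$ by the countable dense subgroup $(\Gamma^{00}\cdot(\Gamma\cap(D')'))/\Gamma^{00}$. Composing the isomorphism from Proposition~\ref{g2andg3iffd2andd3} with this one yields that $G^{00}/G^{000}$ is itself isomorphic, as an abstract group, to the quotient of a compact commutative Lie group by a countable dense subgroup, which is the first assertion. For the final sentence, commutativity is then immediate: any quotient of a commutative group is commutative, so a quotient of a compact commutative Lie group by an arbitrary subgroup is commutative, and hence so is $G^{00}/G^{000}$.

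There is essentially no obstacle here, since the substance of the result is carried entirely by Proposition~\ref{g2andg3iffd2andd3} and the preceding Corollary. The only point worth noting is that Proposition~\ref{g2andg3iffd2andd3} provides an isomorphism of \emph{abstract} groups, which is all that is claimed; consequently no compatibility of logic topologies needs to be verified when composing it with the topological description supplied by the preceding Corollary, and the conclusion follows formally.
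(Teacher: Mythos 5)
Your proposal is correct and follows exactly the paper's own (very brief) derivation: the paper deduces the corollary by combining Proposition~2.7 (the reduction of $G^{00}/G^{000}$ to $D^{00}/D^{000}$) with the immediately preceding Corollary~3.2, precisely as you do, with commutativity following trivially. Your added remark that only an abstract-group isomorphism is needed, so no topological compatibility must be checked, is a fair observation but changes nothing of substance.
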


\vspace{2mm}
\noindent
We have already remarked that as far as bounded hyperdefinable groups are concerned, our isomorphisms are isomorphisms of topological (hence Lie) groups. One would like to have the isomorphism in the Corollary belong to a more structured category. This depends partly on how we want think of objects such as $G^{00}/G^{000}$. This will be treated in subsequent work, in particular the complexity of $G^{00}/G^{000}$ from the point of view of Borel equivalence relations on Polish spaces. 

\vspace{2mm}
\noindent
A final remark, with above notation:
\begin{Proposition} $D^{00} = D^{000}$ if and only $\Gamma^{00}$ has finite index in $\Gamma\cap D^{000}$. 
\end{Proposition}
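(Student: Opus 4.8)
The plan is to read the statement off the Corollary following Theorem~\ref{g2andg3}, which already presents $D^{00}/D^{000}$ as the quotient of the compact commutative Lie group $E/\Gamma^{00}$ by the dense subgroup $(\Gamma^{00}\cdot(\Gamma\cap(D^{\prime})^{\prime}))/\Gamma^{00}$. The first step is to identify the group $\Gamma\cap D^{000}$ appearing in the statement with the numerator of that dense subgroup. Using part~1 of Theorem~\ref{g2andg3}, namely $D^{000}=\Gamma^{00}\cdot(D^{\prime})^{\prime}$, a one-line computation gives $\Gamma\cap D^{000}=\Gamma^{00}\cdot(\Gamma\cap(D^{\prime})^{\prime})$: any product $gh$ with $g\in\Gamma^{00}\subseteq\Gamma$ and $h\in(D^{\prime})^{\prime}$ lies in $\Gamma$ exactly when $h\in\Gamma\cap(D^{\prime})^{\prime}$. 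Consequently $(\Gamma\cap D^{000})/\Gamma^{00}$ is precisely the dense subgroup of $E/\Gamma^{00}$ from the Corollary, and the hypothesis ``$\Gamma^{00}$ has finite index in $\Gamma\cap D^{000}$'' is exactly the assertion that this dense subgroup is finite.

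With this identification in hand, $D^{00}=D^{000}$ holds if and only if the quotient in the Corollary is trivial, i.e. if and only if the dense subgroup $(\Gamma\cap D^{000})/\Gamma^{00}$ is all of $E/\Gamma^{00}$. For the direction ``finite index $\Rightarrow$ equality'', I would argue that a finite subgroup of the Hausdorff topological group $E/\Gamma^{00}$ is closed; being also dense it must equal $E/\Gamma^{00}$, so the quotient, hence $D^{00}/D^{000}$, is trivial. For the converse, if $D^{00}=D^{000}$ then the dense subgroup equals $E/\Gamma^{00}$; but it is countable (it is a quotient of $\Gamma\cap(D^{\prime})^{\prime}$, which is countable by Lemma~\ref{commutator}), so $E/\Gamma^{00}$ is a countable compact Lie group and therefore finite, which gives the required finite index.

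An equivalent and slightly more self-contained route avoids the Corollary. From Theorem~\ref{g2andg3} one has $D^{00}=E\cdot(D^{\prime})^{\prime}$ and $D^{000}=\Gamma^{00}\cdot(D^{\prime})^{\prime}$, and since $\Gamma^{00}\subseteq E\subseteq\Gamma$ the equality $D^{00}=D^{000}$ reduces to $E\subseteq\Gamma^{00}\cdot(D^{\prime})^{\prime}$, i.e. to $E=\Gamma\cap D^{000}$ (using once more $\Gamma\cap D^{000}=\Gamma^{00}\cdot(\Gamma\cap(D^{\prime})^{\prime})$, together with the fact that $E$ contains this product by its defining property of being the smallest type-definable subgroup containing $\Gamma^{00}$ and $\Gamma\cap(D^{\prime})^{\prime}$). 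One then checks that the countable subgroup $(\Gamma\cap D^{000})/\Gamma^{00}$, dense in $E/\Gamma^{00}$ by definition of $E$, is closed (equivalently equal to $E/\Gamma^{00}$) precisely when it is finite.

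The only non-formal ingredient, and the step I expect to need the most care, is the topological fact used in both directions: in a compact Lie group a countable subgroup is closed if and only if it is finite (finite subgroups are closed, while a countable closed subgroup of a compact group is discrete and compact, hence finite). Everything else is bookkeeping with the decompositions from Theorem~\ref{g2andg3} and the index computation $\Gamma\cap D^{000}=\Gamma^{00}\cdot(\Gamma\cap(D^{\prime})^{\prime})$ carried out above.
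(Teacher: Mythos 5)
Your proof is correct and is essentially the paper's own argument: both directions rest on the identification $\Gamma\cap D^{000}=\Gamma^{00}\cdot(\Gamma\cap(D^{\prime})^{\prime})$ from Theorem~3.1, the countability of $\Gamma\cap(D^{\prime})^{\prime}$ from Lemma~1.1, and the fact that a countable closed subgroup of the compact Lie group $\Gamma/\Gamma^{00}$ is finite. Your phrasing of the converse via density in $E/\Gamma^{00}$ is just the logic-topology translation of the paper's step ``finite index $\Rightarrow$ $\Gamma\cap D^{000}$ type-definable $\Rightarrow$ equals $E$, whence $D^{00}=E\cdot(D^{\prime})^{\prime}\subseteq D^{000}$.''
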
 
\begin{proof} Suppose first that $D^{00} = D^{000}$, namely that $D^{000}$ is type-definable. By Theorem 3.1 and notation there clearly $\Gamma^{00}\cdot (\Gamma \cap (D^{\prime})^{\prime}) = \Gamma\cap D^{000}$ so is type-definable. Now $\Gamma \cap (D^{\prime})^{\prime}$ is a countable group $A$ say by Lemma 1.1. 
Then $(\Gamma^{00}\cdot A)/\Gamma^{00}$ is a countable closed subgroup of the commutative Lie group $\Gamma/\Gamma^{00}$, so has to be finite. 

Conversely, if $\Gamma^{00}$ has finite index in $\Gamma\cap D^{000}$ then the latter is definable and has to equal $E$. Hence using Theorem 3.1, 
$D^{00} = E\cdot (D^{\prime})^{\prime} \subseteq D^{000}$ so we have equality.

\end{proof}

%In section 4 we show how the problem to determine whether $G^{00} = G^{000}$ is related to the exactness
%of the functor $G \mapsto G^{00}$.

%%%%%%%%%%%%%%%%%%%%%%%%%%%%%%%%%%%%%%%%%%%%%%%%%%%%%%%%%%%%
%%%%%%%%%%%%%%%%%%%%%%%%%%%%%%%%%%%%%%%%%%%%%%%%%%%%%%%%%%%%
%%%%%%%%%%%%%%%%%%%%%%%%%%%%%%%%%%%%%%%%%%%%%%%%%%%%%%%%%%%%

%%%%%%%%%%%%%%%%%%%%%%%%%%%%%%%%%%%%%%%%%%%%%%%%%%%%%%%%%%%%
%%%%%%%%%%%%%%%%%%%%%%%%%%%%%%%%%%%%%%%%%%%%%%%%%%%%%%%%%%%%
%%%%%%%%%%%%%%%%%%%%%%%%%%%%%%%%%%%%%%%%%%%%%%%%%%%%%%%%%%%%
\section{Exactness} 
 
We now consider the question of the {\em exactness} of the functor which takes a definable, definably connected, group $G$ to $G^{00}$; namely if 
\[
 1\ \to\ L\ \to\ G\ \to\ H\ \to 1 
\]

\noindent
is an exact sequence of definable, definably connected, groups, do we get an exact sequence
$1 \to L^{00} \to G^{00} \to H^{00} \to 1$ ?  We can ask the same question for the $G \mapsto G^{000}$ functor.
By \ref{image} $(i)$ ($G^{00}$ maps onto $H^{00}$, $G^{000}$ maps on to $G^{000}$) exactness of the induced sequences amounts to  $L\cap G^{00} = L^{00}$ and $L\cap G^{000} = H^{000}$.  
When $G$ is definably compact, a positive answer is obtained by Berarducci \cite{Berarducci}, and we are partly motivated by trying to generalize his results.
But note that Theorem 3.3 in \cite{CCI} gives a negative answer in general. There we had the exact sequence 
\[
1\ \to\ \SO_{2}(K)\ \to\ G_{1}\ \to\ \SL_{2}(K)\ \to 1.
\]

\noindent
The analysis (and notation) there gives that $G_{1}^{00} \cap \SO_{2}(K) = \SO_{2}(K)$, and $G_{1}^{000} \cap \SO_{2}(K) =  \SO_{2}(K)^{00} \cdot \Z$.
 In particular $\SO_{2}(K)^{00}$ has {\em infinite} (but of course bounded) index in each of 
 $G_{1}^{00} \cap \SO_{2}(K)$ and $G_{1}^{000} \cap \SO_{2}(K)$.\\

Even when $G$ has a good decomposition (Definition 1.3)  exactness can fail: 
%We first consider the case where $G$ has a good decomposition. It is not hard to see that any normal, definable, definably connected subgroup of $G$ also has a good decomposition, so by Theorem 4.7, it is enough to consider $L\cap G^{00}$ and $L^{00}$. Even then it is possible that $L^{00}$ is a proper subgroup of $L\cap G^{00}$ of finite index:  
Let $G = R \cdot S$ be the almost direct product of $R \cong \SO_{2}(K)$ with $S \cong \SL_{2}(K)$ obtained by identifying the square roots of the identity in both groups. Then $G^{00} = R^{00}\cdot S^{00}$. But $S^{00} = S$, so its intersection with $R$ contains a finite subgroup, whereas $R^{00}$ is the ``infinitesimals" of $R$ and contains no finite subgroup. In this example we have ``almost exactness" in the sense that $R^{00}$ has 
{\em finite index} in $G^{00}\cap R$. See also Remark 4.7 for a related example.
In any case this motivates the following definitions. 

%%%%%%%%%%%%%%%%%%%%%%%%%%%%%%%%%%%%%%
\begin{Definition}
Let $G$ be a definable group. We say that

\begin{enumerate}

\item $G$ has {\em the almost exactness property} if for every normal definable subgroup of $G$ , $H \lhd G$,
$H^{00}$ has finite index in $G^{00} \cap H$.

\item $G$ has {\em the exactness property} if for every normal definable subgroup of $G$ $H \lhd G$,
$ H^{00} = G^{00} \cap H$.

\item $G$ has {\em the strong exactness property} if for every definable subgroup $H < G$,
$H^{00} = G^{00} \cap H$.

\end{enumerate}
\end{Definition}

There are obvious $G^{000}$ analogues.

\vspace{2mm}
\noindent
We will show the following: 
%(and mention briefly $G^{000}$ analogues):

\begin{itemize}
\item the class of definable groups with the almost exactness property coincides with the class
of definable groups $G$ such that $G^{00} = G^{000}$: \ref{exactnessg2g3};

\item for definable groups $G$ with the exactness property, $G/G^{00}$ is isomorphic to $C/C^{00}$, which follows from 4.8 
%(i.e. when $G(\R)$ makes sense, $G/G^{00}$ is the maximal normal connected compact subgroup of $G(\R)$): \ref{goverg2};
(moreover, to justify the definition, we give an easy example of a group with the almost exactness property such that $G/G^{00}$ is not isomorphic to $C/C^{00}$: \ref{SU2SL2});

\item the class of definable groups with the strong exactness property coincides with the class of definably amenable groups: 4.10.
\end{itemize}

\begin{Lemma} \label{reduction-almost-exactness}
Let $H \lhd G$ be definably connected groups such that:

\begin{enumerate}
\item[$(a)$] $H^{00}$ has finite index in $G^{00} \cap H$,

\item[$(b)$] $H$ has the almost exactness property,

\item[$(c)$] $G/H$ has the almost exactness property.
\end{enumerate}

\noindent
Then $G$ has the almost exactness property.
\end{Lemma}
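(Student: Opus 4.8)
The plan is to verify the defining condition directly: fix an arbitrary normal definable subgroup $N \lhd G$ and show that $N^{00}$ has finite index in $G^{00}\cap N$. First note that the inclusion $N^{00}\subseteq G^{00}\cap N$ holds automatically, since $G^{00}\cap N$ is a type-definable subgroup of $N$ of bounded index and hence contains the smallest such, $N^{00}$. Let $\pi\colon G\to \bar{G} := G/H$ be the quotient map, and set $\bar{N} := \pi(N)$, a normal definable subgroup of $\bar{G}$. The key tool is the elementary index identity: for the subgroups $N^{00}\subseteq G^{00}\cap N \subseteq N$ and the homomorphism $\pi|_{N}$ with kernel $N\cap H$, the Dedekind modular law ($(G^{00}\cap N)\cap (N^{00}\cdot H) = N^{00}\cdot((G^{00}\cap N)\cap H)$) yields $[\,G^{00}\cap N : N^{00}\,] = [\,\pi(G^{00}\cap N):\pi(N^{00})\,]\cdot[\,(G^{00}\cap N)\cap H : N^{00}\cap H\,]$. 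Thus it suffices to show that both factors are finite.

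For the image factor, Lemma \ref{image}$(i)$ gives $\pi(N^{00}) = \bar{N}^{00}$ and $\pi(G^{00}) = \bar{G}^{00}$ (working, as usual, over a small parameter set defining everything and using parameter-independence of the components), whence $\bar{N}^{00} = \pi(N^{00})\subseteq \pi(G^{00}\cap N)\subseteq \bar{G}^{00}\cap\bar{N}$. Now hypothesis $(c)$ says $\bar{G}$ has the almost exactness property, so applied to $\bar{N}\lhd\bar{G}$ it yields that $\bar{N}^{00}$ has finite index in $\bar{G}^{00}\cap\bar{N}$. Squeezing $\pi(G^{00}\cap N)$ between $\bar{N}^{00}$ and $\bar{G}^{00}\cap\bar{N}$ shows that $[\,\pi(G^{00}\cap N):\pi(N^{00})\,]$ is finite.

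For the kernel factor I compare $N^{00}\cap H$ with $(G^{00}\cap N)\cap H = G^{00}\cap(N\cap H)$ by sandwiching $(N\cap H)^{00}$ between them. Hypothesis $(b)$, applied to the normal definable subgroup $N\cap H \lhd H$, gives that $(N\cap H)^{00}$ has finite index in $H^{00}\cap(N\cap H) = H^{00}\cap N$. Hypothesis $(a)$ says $H^{00}$ has finite index in $G^{00}\cap H$, and intersecting with $N$ preserves finiteness of index, so $H^{00}\cap N$ has finite index in $G^{00}\cap N\cap H$. Chaining these, $(N\cap H)^{00}$ has finite index in $G^{00}\cap(N\cap H)$. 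Since $N^{00}\cap H$ is type-definable of bounded index in $N\cap H$ (as $N^{00}$ is bounded index in $N$) we have $(N\cap H)^{00}\subseteq N^{00}\cap H\subseteq G^{00}\cap(N\cap H)$, and the outer terms have finite index between them; hence $[\,G^{00}\cap(N\cap H):N^{00}\cap H\,]$ is finite, as required.

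Both factors being finite, $N^{00}$ has finite index in $G^{00}\cap N$, so $G$ has the almost exactness property. The argument is essentially a diagram chase, and the only genuinely load-bearing step is the index identity; the rest is bookkeeping, repeatedly using that $N^{00}\subseteq G^{00}$ and $H^{00}\subseteq G^{00}$ (each being contained in the corresponding bounded-index type-definable trace of $G^{00}$), and that finiteness of index is preserved both under intersecting with a fixed subgroup and under squeezing. I do not expect a real obstacle here; in particular $N$ need not even be assumed definably connected, since the whole argument refers only to definability and bounded index.
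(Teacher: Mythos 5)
Your proof is correct, and it follows the same structural skeleton as the paper's --- both split the problem along $\pi\colon G\to G/H$, using $(c)$ for the image part and $(a)$ together with $(b)$ (applied to $N\cap H\lhd H$) for the part inside $H$ --- but the bookkeeping is genuinely different. The paper argues by elements with a uniform exponent: it extracts $n_1,n_2,n_3$ from the three finite-index hypotheses, shows $x^{n_1n_2n_3}\in N^{00}$ for every $x\in G^{00}\cap N$ (writing $x^{n_3}=y\cdot h$ with $y\in N^{00}=\pi^{-1}$-witness of $\bar x^{\,n_3}\in\bar N^{00}$ and $h\in G^{00}\cap N\cap H$, then pushing $h$ into $N^{00}$ via $(a)$ and $(b)$), and tacitly identifies ``finite index'' with ``bounded exponent quotient''. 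That identification is where the paper leans on extra structure: the direction it needs to conclude --- a bounded-exponent subquotient of $N/N^{00}$ is finite --- holds because $N/N^{00}$ is a compact Lie group in the $o$-minimal setting, so its closed bounded-exponent subgroups are finite; it fails for general compact groups (e.g. $(\Z/2\Z)^{\mathbb N}$). Your multiplicative index identity $[\,G^{00}\cap N:N^{00}\,]=[\,\pi(G^{00}\cap N):\pi(N^{00})\,]\cdot[\,G^{00}\cap N\cap H:N^{00}\cap H\,]$, justified by the modular law and the normality of $N^{00}$ in $G$, sidesteps this entirely: you never pass through exponents, so your argument is more elementary and works verbatim in any context where Lemma \ref{image}$(i)$ holds, with no appeal to the Lie structure of $N/N^{00}$. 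Your closing observation that $N$ need not be definably connected is also right and is consistent with Definition 4.1, which quantifies over all normal definable subgroups. The only points worth making explicit in a final write-up are the two small facts you use silently: that $N^{00}$ is normal in $G$ (conjugation by $g\in G$ preserves $N$, hence its smallest type-definable bounded-index subgroup), which underwrites the coset counting in the index identity, and that intersecting with a fixed subgroup does not increase index, which you do state.
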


\begin{proof} Let $N \lhd G$ be a definable subgroup. We want
to show that $N^{00}$ has finite index in $G^{00} \cap N$,
i.e. there is $n \in \N$ such that $x^n \in N^{00}$ for each $x \in G^{00} \cap N$.

If $[(G^{00} \cap H) : H^{00}] = n_1$ (by condition $(a)$), $[(H^{00} \cap N) : (H \cap N)^{00}] = n_2$ (by condition $(b)$), 
$[((N \cdot H/H) \cap (G/H)^{00}) : (N \cdot H/H)^{00} ] = n_3$ (by condition $(c)$), we claim that $n = n_1 \cdot n_2 \cdot n_3$ works.  

Assume $x \in G^{00} \cap N$. \\
If $x \in H$, then $x^{n_1} \in H^{00}$. So $x^{n_1} \in H^{00} \cap N$, and $x^{n_1 \cdot n_2} \in (H \cap N)^{00} \subset N^{00}$. \\
If $x  \not \in H$, consider the canonical projection $\pi \colon G \to G/H$ and let $\pi(x) = \bar{x} \in G/H$.
We have $\bar{x} \in \bar{N} = \pi(N)$, $\bar{x} \in \pi(G^{00}) = (G/H)^{00}$, and therefore $\bar{x}^{n_3} \in \bar{N}^{00}$. Then 
\[
x^{n_3} = y \cdot h,
\]

for some $y \in N^{00}$ and $h \in H$. Now:
\begin{align*}
x^{n_3}, y \in G^{00}\ &\Rightarrow\ h \in G^{00}\  \stackrel{(a)}{\Rightarrow}\  h^{n_1} \in H^{00},\\
x, y \in N\ &\Rightarrow\ h \in N, \\
h^{n_1} \in H^{00} \cap N \ &\stackrel{(b)}{\Rightarrow}\ h^{n_1 \cdot n_2} \in (N \cap H)^{00} \subseteq N^{00}.\\
\end{align*}

Therefore $x^n = h^n \cdot y^{\prime}$, for some $y^{\prime} \in N^{00}$ (both $N^{00}$ and $H$ are normal in $G$), and our claim is proved.
\end{proof}
 
\begin{Remark} \label{reduction-exactness}
After replacing condition $(a)$ with $H^{00} = G^{00} \cap H$, one can show corresponding Lemmas for
the exactness property and the strong exactness property.
\end{Remark}
 
\begin{Theorem} \label{exactnessg2g3}
Let $G$ be a definable group. Then $G^{00} = G^{000}$ if and only if $G$ has the almost exactness property.   
\end{Theorem}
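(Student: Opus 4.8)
The plan is to localize the whole dichotomy in a single ``test'' subgroup, namely $\Gamma = Z(D)^{0}$ (and its preimage in $G$), and then to use the reduction Lemma \ref{reduction-almost-exactness} to pass from almost exactness relative to that one subgroup to the full property. Throughout I would use Proposition \ref{g2andg3iffd2andd3} to replace the condition $G^{00}=G^{000}$ by $D^{00}=D^{000}$, together with the criterion of the last Proposition of Section 3 (namely: $D^{00}=D^{000}$ iff $\Gamma^{00}$ has finite index in $\Gamma\cap D^{000}$) and the computation $D^{00}\cap\Gamma = E$, which follows directly from Theorem \ref{g2andg3}(2) since any $ex\in\Gamma$ with $e\in E\subseteq\Gamma$ and $x\in(D')'$ forces $x\in\Gamma\cap(D')'\subseteq E$. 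We may assume $G$ is definably connected, as $G^{00},G^{000}$ depend only on $G^{0}$.

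For the forward implication I would argue by contraposition. Suppose $G^{00}\neq G^{000}$, so $D^{00}\neq D^{000}$ and hence, by the Section 3 criterion, $\Gamma^{00}$ has infinite index in $\Gamma\cap D^{000}$. Let $\hat\Gamma\lhd G$ be the (definable, normal) preimage of $\Gamma$ under $G\to G/W=\bar G$. Since $W$ is torsion-free, $W=W^{00}$ lies in each of $G^{00}$ and $\hat\Gamma^{00}$, so by Lemma \ref{image} quotienting by $W$ identifies $\hat\Gamma^{00}$ with $\Gamma^{00}$ and $G^{00}\cap\hat\Gamma$ with $\bar G^{00}\cap\Gamma$. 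As $D^{000}\subseteq\bar G^{000}\subseteq\bar G^{00}$, we get $\Gamma\cap D^{000}\subseteq\bar G^{00}\cap\Gamma$, whence $[\,G^{00}\cap\hat\Gamma:\hat\Gamma^{00}\,]=[\,\bar G^{00}\cap\Gamma:\Gamma^{00}\,]$ is infinite, so $\hat\Gamma$ witnesses the failure of the almost exactness property.

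For the converse I would assume $G^{00}=G^{000}$ (equivalently $D^{00}=D^{000}$) and build almost exactness in three applications of Lemma \ref{reduction-almost-exactness}. First, $D$ has the property, applying the lemma to $\Gamma\lhd D$ with $D/\Gamma=D_{2}$: here $\Gamma$ is definably compact commutative, hence (fully) exact by \cite{Berarducci}; $D_{2}$ is semisimple with no definably compact parts, so $D_{2}^{00}=D_{2}$ and every definable normal $N\lhd D_{2}$ has $N^{00}=N^{0}$ of finite index in $N$ (as in Lemmas \ref{image}, \ref{almsimple}), giving almost exactness; and condition $(a)$ — that $\Gamma^{00}$ has finite index in $D^{00}\cap\Gamma=E$ — is precisely our hypothesis read through the Section 3 criterion. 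Secondly, $\bar G=C'\cdot D$ has the property, applying the lemma to $D\lhd\bar G$: here $\bar G/D$ is definably compact (hence exact), and $(a)$ holds because $\bar G^{00}=(C')^{00}\cdot D^{00}$ by Lemma \ref{product} and $(C')^{00}\cap D\subseteq C'\cap D$ is finite. Finally $G$ itself, applying the lemma to $W\lhd G$: here $W$ is torsion-free (so trivially almost exact by Lemma \ref{torsion-free}) and $(a)$ is automatic since $G^{00}\cap W=W=W^{00}$.

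The main obstacle, and really the only content, is condition $(a)$ at the stage $\Gamma\lhd D$: one must verify $D^{00}\cap\Gamma=E$ and then recognize, via the final Proposition of Section 3, that $\Gamma^{00}$ has finite index in this intersection exactly when $D^{00}=D^{000}$. This is where the equivalence is forced, and it is the step I would check most carefully; notice that the very same subgroup $\Gamma$ (pulled back to $\hat\Gamma$) supplies the witness in the forward direction, so both implications turn on the same computation. Everything else — the base cases (torsion-free, definably compact, semisimple with no compact parts) and the finite-index estimates for $(a)$ at the other two stages — should be routine given Lemmas \ref{image}, \ref{product}, \ref{compact}, \ref{torsion-free}, \ref{almsimple} and \cite{Berarducci}.
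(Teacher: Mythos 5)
Your proposal is correct and is essentially the paper's own argument: the paper isolates your criterion as Lemma \ref{reductiontoZ(D)} (that $G$ is almost exact iff $Z(D)^{00}$ has finite index in $Z(D)^{0}\cap D^{00}$), proving the converse by the very same three-stage application of Lemma \ref{reduction-almost-exactness} to $\Gamma \lhd D$, $D \lhd \bar{G}$, $W \lhd G$, and the forward direction by applying almost exactness to the solvable radical $R$ (your $\hat{\Gamma}$) and projecting modulo $W$, then combines this with Propositions \ref{g2andg3iffd2andd3} and 3.4 exactly as you do. Your only additions are harmless bookkeeping the paper leaves implicit, namely the identification $D^{00}\cap\Gamma = E$ and the verification of hypotheses $(b)$, $(c)$ at each stage (Berarducci for the definably compact pieces, the semisimple and torsion-free base cases).
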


To this end, we show first:

\begin{Lemma} \label{reductiontoZ(D)}
Let $G$ be a definably connected group. Then $G$ has the almost exactness property if and only if
$Z(D)^{00}$ has finite index in $Z(D)^{0} \cap D^{00}$.
\end{Lemma}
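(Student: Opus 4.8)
The plan is first to rewrite the stated condition intrinsically in terms of $\Gamma = Z(D)^{0}$, and then to prove the two implications separately: necessity by a single application of the almost exactness property to the solvable radical, and sufficiency by iterating the reduction Lemma \ref{reduction-almost-exactness} up a chain of normal subgroups. Since $H^{00} = (H^{0})^{00}$ for every definable $H$, we have $Z(D)^{00} = \Gamma^{00}$ and $Z(D)^{0}\cap D^{00} = \Gamma\cap D^{00}$, so the condition to be characterized is simply that $\Gamma^{00}$ have finite index in $\Gamma\cap D^{00}$. (By Theorem \ref{g2andg3}, $\Gamma\cap D^{00}=E$, but I will not need the exact identification.)

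For necessity, suppose $G$ has the almost exactness property and apply it to the solvable radical $R\lhd G$, which is definably connected with $R/W = \Gamma$. Then $R^{00}$ has finite index in $G^{00}\cap R$, and I claim this index equals $[\,{\bar G}^{00}\cap\Gamma : \Gamma^{00}\,]$. Indeed $W=W^{00}\subseteq R^{00}\subseteq G^{00}\cap R$, so passing to the quotient by $W$ and using Lemma \ref{image} gives $R^{00}/W = \Gamma^{00}$, while $W\subseteq G^{00}$ together with the argument $(*)$ in the proof of Proposition \ref{goodg2=g3} (i.e. $G^{00}=\pi_{W}^{-1}(\pi_{W}(G^{00}))$) gives $(G^{00}\cap R)/W = {\bar G}^{00}\cap\Gamma$. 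Since $D^{00}\subseteq{\bar G}^{00}$ we have $\Gamma\cap D^{00}\subseteq{\bar G}^{00}\cap\Gamma$, whence $[\,\Gamma\cap D^{00}:\Gamma^{00}\,]\le[\,{\bar G}^{00}\cap\Gamma:\Gamma^{00}\,]=[\,G^{00}\cap R:R^{00}\,]<\infty$, which is the required condition.

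For sufficiency, assume $\Gamma^{00}$ has finite index in $\Gamma\cap D^{00}$ and propagate almost exactness up the chain $\Gamma\lhd D$, then $D\lhd{\bar G}$, then $W\lhd G$, via Lemma \ref{reduction-almost-exactness}. The decisive link is $H=\Gamma\lhd D$: hypothesis $(a)$ is exactly our assumption; $(b)$ holds because $\Gamma$ is definably compact, and definably compact groups are exact by \cite{Berarducci}; and $(c)$ holds because $D/\Gamma = D_{2}$ is semisimple with no definably compact parts, so $D_{2}^{00}=D_{2}$ by Lemmas \ref{product} and \ref{almsimple}, and every definable normal $M\lhd D_{2}$ has $M^{0}$ again semisimple with no compact parts, hence $M^{00}=M^{0}$ of finite index in $D_{2}^{00}\cap M = M$. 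Thus $D$ is almost exact. For $H=D\lhd{\bar G}$: $(a)$ holds since ${\bar G}^{00}=(C^{\prime})^{00}\cdot D^{00}$ (Lemma \ref{product}) meets $D$ in $((C^{\prime})^{00}\cap D)\cdot D^{00}$, and $(C^{\prime})^{00}\cap D\subseteq C^{\prime}\cap D$ is finite by the almost direct product of Proposition \ref{decomposition}; $(b)$ was just shown; $(c)$ holds since ${\bar G}/D\cong C^{\prime}/(C^{\prime}\cap D)$ is definably compact. Finally for $H=W\lhd G$: $(a)$ is immediate as $W=W^{00}\subseteq G^{00}$; $(b)$ holds because $W$ is torsion-free, so every definable $N\le W$ has $N^{00}=N^{0}$ of finite index in $N$ (Lemma \ref{torsion-free}); and $(c)$ is the almost exactness of ${\bar G}=G/W$ just established. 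Hence $G$ is almost exact.

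The routine part is that the building blocks $\Gamma$, $C^{\prime}$, $D_{2}$ and $W$ are individually almost exact, which is handed to us by \cite{Berarducci}, Lemma \ref{torsion-free} and Lemma \ref{almsimple}. The real content, and where I expect to spend the most care, is the verification of hypothesis $(a)$ of Lemma \ref{reduction-almost-exactness} at each link. At the two ``compact-against-noncompact'' links the defect is automatically finite (bounded by $|C^{\prime}\cap D|$, or trivial for $W$), using the product description of ${\bar G}^{00}$ from Lemma \ref{product} and finiteness of $C^{\prime}\cap D$; and at the central link $\Gamma\lhd D$ hypothesis $(a)$ is literally the condition in the statement. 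This is precisely why the single finite-index condition on $Z(D)$ controls almost exactness for all of $G$.
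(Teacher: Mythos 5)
Your proof is correct and takes essentially the same route as the paper: necessity by applying almost exactness to the solvable radical $R$ and passing to $G/W$ (your inequality $[\,\Gamma\cap D^{00}:\Gamma^{00}\,]\le[\,G^{00}\cap R:R^{00}\,]$ suffices where the paper asserts the equality $\pi(G^{00}\cap R)=D^{00}\cap Z(D)^{0}$), and sufficiency by the same three applications of Lemma \ref{reduction-almost-exactness} along the chain $Z(D)^{0}\lhd D$, $D\lhd G/W$, $W\lhd G$. Your explicit verifications of hypotheses $(a)$--$(c)$ at each link (Berarducci's exactness for the definably compact pieces, Lemma \ref{torsion-free} for $W$, and $M^{00}=M^{0}$ for normal definable $M\lhd D_{2}$) are precisely the details the paper leaves implicit.
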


\begin{proof}
Suppose $G$ has the almost exactness property. In particular, for the solvable radical $R$ of $G$, $R^{00}$
has finite index in $G^{00} \cap R$. 
Let $\pi : G \to G/W$ be the canonical projection. Then $\pi(R^{00}) = Z(D)^{00}$ and $\pi(G^{00} \cap R) = D^{00} \cap Z(D)^0$. Hence $Z(D)^{00}$ has finite index in $Z(D)^{0} \cap D^{00}$.

Conversely, suppose $Z(D)^{00}$ has finite index in $Z(D)^{0} \cap D^{00}$. Then we can apply Lemma \ref{reduction-almost-exactness} (with $H = Z(D)^0$) to deduce that the group $D$ has the almost exactness property. By the same Lemma (with $H = D$), also $\bar{G} = G/W$ has the almost exactness property. Finally $G$ has the almost exactness property taking $H = W$.
\end{proof}

\begin{proof}[Proof of Theorem \ref{exactnessg2g3}] 
Suppose $G^{00} = G^{000}$. By Proposition 2.7, $D^{00} = D^{000}$, so by Proposition 3.4, $Z(D)^{00}$ has finite index in $Z(D)^{0}\cap D^{00}$. By Lemma 4.5, $G$ has the almost exactness property.

Conversely, if  $G$ has the almost exactness property then by the 4.5, $Z(D)^{00}$ has finite index in $Z(D)^{0}\cap D^{00}$, namely $\Gamma^{00}$ has finite index in $\Gamma \cap D^{00}$ in the language of Prop. 3.4. But $D^{000}$ is contained in $D^{00}$ (and contains $\Gamma^{000}$). Hence $\Gamma^{00}$ has finite index in $\Gamma\cap D^{000}$ and we can apply Proposition 3.4 and 2.7 to conclude that $G^{00} = G^{000}$. 
%We want to show that $G$ has the almost exactness property.  
%y \ref{g2andg3iffd2andd3} and \ref{g2andg3}, we know that $D^{00} = Z(D)^{00} \cdot [D, D]$.
%Therefore $Z(D)^{00}$ has finite index in $Z(D)^{0} \cap D^{00}$. By \ref{reductiontoZ(D)} we can conclude
%that $G$ has the almost exactness property.

%Conversely, assume $G$ has the almost exactness property. Then $Z(D)^{00}$ has finite index in $Z(D)^{0} \cap D^{00}$, and $G^{00} = G^{000}$.  
\end{proof}

We conclude using \ref{goodg2=g3}: 
\begin{Corollary}  If $G$ has a good decomposition (for example if $G$ is linear, or $G$ is algebraic) then  $G$ has the almost exactness property. 
\end{Corollary}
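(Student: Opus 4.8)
The plan is to observe that this corollary is nothing more than the concatenation of two results already in hand, so the work has effectively been done in Proposition \ref{goodg2=g3} and Theorem \ref{exactnessg2g3}. First I would invoke Proposition \ref{goodg2=g3}: since $G$ is assumed to have a good decomposition in the sense of Definition \ref{goodecomposition}, we immediately conclude $G^{00} = G^{000}$. (As noted there, this holds without any definably connected hypothesis on $G$, since $G^{00} = (G^0)^{00}$ and $G^{000} = (G^0)^{000}$.)

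Next I would feed this equality into Theorem \ref{exactnessg2g3}, which asserts the equivalence ``$G^{00} = G^{000}$ if and only if $G$ has the almost exactness property.'' Applying the relevant direction of that equivalence to our $G$ yields at once that $G$ has the almost exactness property, which is the assertion of the corollary.

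For the parenthetical examples, the point to record is that linearity or algebraicity is a sufficient condition for having a good decomposition: by Remark 2.9 of \cite{CCI} any $G$ which is linear (a definable subgroup of some $\GL_n(K)$) or algebraic (of the form $H(K)$ for $H$ an algebraic group over $K$) admits a good decomposition. Thus these classes fall under the hypothesis of the corollary and inherit the almost exactness property via the same two-step argument; this was already anticipated in the remark following Proposition \ref{goodg2=g3}.

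There is genuinely no obstacle here: the corollary is a formal consequence of chaining Proposition \ref{goodg2=g3} into Theorem \ref{exactnessg2g3}, with the linear/algebraic remarks supplying concrete instances of the hypothesis. The substantive content lives entirely in those two prior results, and in particular in the ``hard'' direction of Theorem \ref{exactnessg2g3} whose proof routes through Lemma \ref{reductiontoZ(D)} and Proposition 3.4.
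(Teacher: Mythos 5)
Your proposal is correct and matches the paper's own argument exactly: the corollary appears immediately after Theorem \ref{exactnessg2g3} with the note ``We conclude using \ref{goodg2=g3}'', i.e.\ it is precisely the chaining of Proposition \ref{goodg2=g3} (good decomposition implies $G^{00}=G^{000}$) into the equivalence of Theorem \ref{exactnessg2g3}, with Remark 2.9 of \cite{CCI} supplying the linear and algebraic cases. Nothing to add.
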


Finally we relate the various exactness properties to earlier structure theorems and results as well as definable amenability.

%%%%%%%%%%%%%%%%%%%%%%%%%%%%%%%%%%%%%%%%%%%
\begin{Remark} \label{almexact-isog}
$G/G^{00}$ is isogenous to $C/C^{00}$ if and only if $G$ has the almost exactness property.
\end{Remark}

\begin{proof}
We can assume $G = \bar{G} = G/W$. 

Suppose first $G/G^{00}$ is isogenous to $C/C^{00}$. Then by Proposition \ref{goverg2}, $C^{00}$ has finite index in $G^{00} \cap C$. So we can apply Lemma \ref{reduction-almost-exactness} (with $H = C$) and conclude that $G$ has the almost
exactness property.
Conversely, if $G$ has the almost exactness property, then in particular $C^{00}$ has finite index in $G^{00} \cap C$. Again by \ref{goverg2} $G/G^{00}$ is isogenous to $C/C^{00}$.
\end{proof}

It is well-known that connected Lie groups have a (unique) maximal normal connected compact subgroup (\cite[Theorem 14]{Iwasawa}). If $G$ has very good reduction,
then it is easy to see that the maximal normal connected compact subgroup of $G(\R)$ is $C(\R)$, which is isomorphic (as a compact group) to $C/C^{00}$. So by \ref{almexact-isog} we can conclude:

%%%%%%%%%%%%%%%%%%%
\begin{Corollary}
Assume $G$ has very good reduction. Then 
$G/G^{00}$ is isogenous to the maximal normal connected compact subgroup of $G(\R)$ if and only if
$G$ has the almost exactness property.
\end{Corollary}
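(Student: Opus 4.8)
The plan is to reduce the statement to Remark~\ref{almexact-isog}, which already identifies the almost exactness property with the condition that $G/G^{00}$ be isogenous to $C/C^{00}$. Since isogeny is an equivalence relation compatible with isomorphism of compact groups, it suffices to verify that the maximal normal connected compact subgroup of $G(\R)$ is isomorphic (as a compact group) to $C/C^{00}$; the desired biconditional then follows at once by transitivity of isogeny. Thus the entire content of the corollary lies in the identification asserted in the paragraph preceding it, and the proof amounts to spelling that identification out and quoting Remark~\ref{almexact-isog}.

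First I would invoke \cite[Theorem 14]{Iwasawa} to obtain the unique maximal normal connected compact subgroup $K$ of the Lie group $G(\R)$. Working in the reduct ${\bar M}|L_0$ over $\R$, we have the exact sequences $1 \to W(\R) \to G(\R) \to {\bar G}(\R) \to 1$ and $1 \to C(\R) \to {\bar G}(\R) \to D_2(\R) \to 1$. Since $W$ is definably connected and torsion-free, $W(\R)$ is a connected solvable Lie group admitting the filtration required in Fact~2.9 and containing no nontrivial compact subgroup; hence $K \cap W(\R) = \{1\}$, and the projection $G(\R) \to {\bar G}(\R)$ maps $K$ isomorphically onto a normal connected compact subgroup of ${\bar G}(\R)$. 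Because $D_2(\R)$ is semisimple with no compact factors, it contains no nontrivial normal connected compact subgroup, so the image of $K$ lies in $C(\R)$, giving an embedding $K \hookrightarrow C(\R)$. For the reverse inclusion I would lift $C(\R)$ back to $G(\R)$: letting $\tilde{C}$ denote the preimage of $C(\R)$, the sequence $1 \to W(\R) \to \tilde{C} \to C(\R) \to 1$ satisfies the hypotheses of Fact~2.9, which yields a compact subgroup of $\tilde{C}$ projecting isomorphically onto $C(\R)$, so that $\dim K = \dim C(\R)$ and in fact $K \cong C(\R)$. Finally, by \cite{NIPI} and \cite{Peterzil-Pillay} the standard part map gives $C(\R) \cong C/C^{00}$ as compact groups, whence $G/G^{00}$ is isogenous to $K$ if and only if it is isogenous to $C/C^{00}$, which by Remark~\ref{almexact-isog} holds precisely when $G$ has the almost exactness property.

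The main obstacle is the normality claim in the lifting step. Fact~2.9 produces a maximal compact subgroup of $\tilde{C}$ (equivalently of $G(\R)$) only up to conjugacy, and a priori such a maximal compact need not be normal in $G(\R)$; for Euclidean‑type motion groups, for instance, the maximal compact rotation subgroup is genuinely non‑normal. The delicate point is therefore to extract from the conjugacy class of maximal compacts a genuinely normal connected compact subgroup realizing all of $C(\R)$, which is exactly where one must exploit the structural provenance of $C$ as the maximal normal definably compact part in ${\bar G}=G/W$ together with the rigidity afforded by very good reduction. Once this normal realization is secured, the identification $K\cong C(\R)$ is forced by dimension and the embedding above, and the remainder of the argument is the formal combination recorded in the first paragraph.
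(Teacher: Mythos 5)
Your final paragraph correctly identifies the fatal point, and the proposal is genuinely incomplete exactly there: Fact~2.9 produces a compact lift $C_1$ of $C(\R)$ only as a \emph{maximal compact} subgroup of the preimage $\tilde{C}$, with no normality in $G(\R)$, so you can conclude neither $C_1 \subseteq K$ nor $\dim K \geq \dim C(\R)$; what your argument actually proves is only the embedding $K \hookrightarrow C(\R)$. You defer the "normal realization" of $C(\R)$ inside $G(\R)$ as a delicate step to be secured later --- but it cannot be secured, and the Euclidean motion groups you yourself mention are precisely the counterexample. Take $G = K^2 \rtimes \SO_2(K)$, semialgebraic over $\Q$, so with very good reduction. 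Here $W = K^2$, $\bar{G} = C = \SO_2(K)$ and $D_2$ is trivial. In $G(\R) = \R^2 \rtimes \SO_2(\R)$ any element $(v,r)$ with $r \neq 1$ has unbounded conjugacy class under conjugation by translations (conjugation by $(w,1)$ moves $v$ by $(1-r)w$, and $1-r$ is invertible), so every compact normal subgroup lies in $\R^2$ and is therefore trivial: $K = \{1\}$, while $C(\R) = \SO_2(\R)$. On the other hand $G$ is linear, hence $G^{00} = G^{000}$ and, by Theorem~\ref{exactnessg2g3}, $G$ has the almost exactness property, while $W \subseteq G^{00}$ gives $G/G^{00} \cong \SO_2(\R)$, which is not isogenous to the trivial group $K$. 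So the identification $K \cong C(\R)$ that your second paragraph aims at --- and with it the corollary as literally stated --- is false; no amount of exploiting the "structural provenance" of $C$ will produce a normal compact copy of $C(\R)$ in $G(\R)$.

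For comparison: the paper's own proof consists of your first paragraph plus the bare assertion that "it is easy to see" the maximal normal connected compact subgroup of $G(\R)$ is $C(\R)$, so your attempt is faithful to the intended route, and your refusal to wave through that step exposes a genuine defect in the source. The statement becomes correct (and your own machinery proves it) if one reads $\bar{G}(\R) = (G/W)(\R)$ in place of $G(\R)$ --- note that $C$ is a subgroup of $\bar{G}$, not of $G$, so $C(\R)$ sits inside $\bar{G}(\R)$ in the first place. There your embedding argument already yields both inclusions: $C(\R)$ is a normal connected compact subgroup of $\bar{G}(\R)$, and any normal connected compact subgroup of $\bar{G}(\R)$ has normal connected compact image in $D_2(\R)$, which is semisimple with no compact factors, hence the image is trivial and the subgroup lies in $C(\R)$. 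With that reading, combining $K = C(\R) \cong C/C^{00}$ (standard part map, as you quote from \cite{NIPI} and \cite{Peterzil-Pillay}) with Remark~\ref{almexact-isog} finishes the proof exactly as in your opening paragraph, and the entire lifting apparatus through $W(\R)$ --- Fact~2.9 included --- becomes unnecessary.
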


%%%%%%%%%%%%%%%%%%%%%%%%
\begin{Remark} \label{SU2SL2}
``$G$ has the almost exactness property" does not imply ``$G/G^{00}$ is isomorphic
to $C/C^{00}$". 
\end{Remark}

\begin{proof}
Consider the group $\SU_2(\C)$ of  unitary matrices $2\times 2$ of determinant $1$.
We recall that it is the universal cover of the compact connected simple Lie group $\SO_3(\R)$, with kernel
$\{\pm I\}$. 

Suppose $M =K$ is a saturated real closed field, and $K[i]$ its algebraic closure.
Take $G$ to be the almost direct product $\SU_2(K[i]) \cdot \SL_2(K)$ obtained by identifying the square roots of the identity in both groups. Then $G/G^{00}$ is isomorphic to $\SO_3(\R)$ and $C/C^{00}$ is isomorphic
to $\SU_2(\C)$. 

This is an example of a (semisimple) definable group for which the functor $G \mapsto G^{00}$
is almost exact, but such that $G/G^{00}$ is not isomorphic to $C/C^{00}$. 
\end{proof}
%%%%%%%%%%%%%%%%%%%%%%%%%%

%%%%%%%%%%%%%%%%%%%%%%%%%%%
\begin{Proposition} \label{trivialquotient}
Let $G$ be a definably connected group. Then $G/G^{00}$ is isomorphic to $C/C^{00}$ if and only if

\begin{enumerate}
\item $R^{00}$ has finite index in $G^{00} \cap R$.

\item $G/R$ has the exactness property.
\end{enumerate}
\end{Proposition}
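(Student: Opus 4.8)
The plan is to reduce everything to the definably compact group $C$ and its maximal compact quotient $C/C^{00}$, and then to separate the analysis into a ``torus direction'' governed by condition (1) and a ``semisimple direction'' governed by condition (2). First I would pass to $\bar G = G/W$: since $W$ is torsion-free we have $G/G^{00}\cong\bar G/\bar G^{00}$ (as in the proof of \ref{goverg2}), the subgroup $C$ is unchanged, condition (2) concerns only $G/R=\bar G/\Gamma$, and condition (1) transports under $\pi\colon G\to\bar G$ to the statement that $\Gamma^{00}$ has finite index in $\bar G^{00}\cap\Gamma$ (using $\pi(R^{00})=\Gamma^{00}$ and $\pi(G^{00}\cap R)=\bar G^{00}\cap\Gamma$, exactly as in the proof of \ref{reductiontoZ(D)}). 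So I may assume $G=\bar G$. Now by Proposition \ref{goverg2} the quotient map gives $G/G^{00}\cong C/(\bar G^{00}\cap C)$, and since $C^{00}\leq\bar G^{00}\cap C$ this is induced by a surjection of connected compact Lie groups $q\colon C/C^{00}\twoheadrightarrow G/G^{00}$ with closed normal kernel $K=(\bar G^{00}\cap C)/C^{00}$. The whole question becomes: when is $C/C^{00}$ isomorphic to $(C/C^{00})/K$?

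For this I would use the almost direct decomposition $C=C'\cdot\Gamma$ of Proposition \ref{decomposition}, under which $C/C^{00}$ is an almost direct product of the compact semisimple group $S$ (the image of $C'$) and the torus $T=\Gamma/\Gamma^{00}$. The two Lie-theoretic facts I would invoke are that a torus is isomorphic to each of its finite quotients, whereas a compact semisimple group is \emph{rigid}, being isomorphic to no proper central quotient of itself. Condition (1) handles the torus direction: by the computation above it is exactly the hypothesis of Lemma \ref{reductiontoZ(D)}, hence equivalent to $G$ having the almost exactness property, hence by Theorem \ref{exactnessg2g3} to $G^{00}=G^{000}$, and by Remark \ref{almexact-isog} to $G/G^{00}$ being isogenous to $C/C^{00}$, i.e. to $K$ being \emph{finite}. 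Thus condition (1) is precisely the assertion that $q$ is an isogeny (a $0$-dimensional kernel), which is clearly necessary for an isomorphism and which renders the torus part of $K$ harmless.

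Condition (2) handles the semisimple direction. Assuming $K$ finite (so central, as $C/C^{00}$ is connected), I would project along $\bar G\to\bar G/\Gamma=G/R$ and compare the image of $C'$ with the maximal normal compact subgroup of the semisimple group $G/R$, matching the way $K$ meets $S$ with the identities $(G/R)^{00}\cap H=H^{00}$ defining exactness of $G/R$. In the forward direction, an isomorphism $G/G^{00}\cong C/C^{00}$ forces (by rigidity applied to the derived subgroup $S$) that $K$ meet $S$ trivially, which translates into condition (2); combined with ``$K$ finite'' this recovers (1). Conversely, condition (2) forces $K$ to be ``torus only,'' so that $q$ collapses only a finite subgroup of $T$ and $(C/C^{00})/K\cong C/C^{00}$ by the torus fact.

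The hard part will be this last matching. Rigidity tells us only that $K$ meets $S$ trivially, but ``$G/R$ exact for \emph{every} normal $H$'' is genuinely more than the single identity $(G/R)^{00}\cap(\text{compact part})=(\text{compact part})^{00}$: because the compact and noncompact parts of $G/R$, and the simple factors inside each, form only \emph{almost} direct products, passing to sub-products can introduce finite central discrepancies. The delicate bookkeeping on which the proof turns is to control these finite central kernels simultaneously through the three almost direct products $\bar G=C'\cdot D$, $C=C'\cdot\Gamma$ and $G/R=(\text{compact part})\cdot D_{2}$, and to verify that they are exactly the data deciding whether $(C/C^{00})/K\cong C/C^{00}$ — so that the Lie-theoretic condition for this isomorphism coincides on the nose with the full exactness property of $G/R$.
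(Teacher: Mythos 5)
Your reductions in the first two paragraphs are correct and essentially coincide with the paper's: the passage to $\bar G = G/W$, the identification $G/G^{00}\cong C/(\bar G^{00}\cap C)$ via Proposition \ref{goverg2}, and the equivalence of condition (1) with finiteness of the kernel $K=(\bar G^{00}\cap C)/C^{00}$ through Lemma \ref{reductiontoZ(D)}, Theorem \ref{exactnessg2g3} and Remark \ref{almexact-isog}. Your forward direction also matches the paper in substance: the paper implements your ``rigidity'' argument by comparing the finite centres of $G_1/G_1^{00}$ and $C_1/C_1^{00}$, where $G_1=G/R$ and $C_1=C/R$. Moreover, the first half of the worry in your last paragraph --- that full exactness of $G/R$ is genuinely more than a single identity --- is dispatched by machinery you cite elsewhere but do not deploy here: the exactness version of Lemma \ref{reduction-almost-exactness} (Remark \ref{reduction-exactness}), applied with $H=C_1$, reduces the exactness property of $G_1$ to the single identity $C_1^{00}=G_1^{00}\cap C_1$, because $C_1$ is definably compact (hence has the exactness property, by Berarducci) and $G_1/C_1$ is semisimple with no definably compact parts (so each of its definably connected normal definable subgroups $N$ satisfies $N=N^{00}$).

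The genuine gap is in the converse, exactly where you stop and defer to ``delicate bookkeeping.'' What condition (2) actually yields (as in the paper) is only $K\subseteq T$, where $T=(R\cdot C^{00})/C^{00}$ is the central torus of $C/C^{00}=S\cdot T$; this is \emph{not} the ``torus only'' conclusion your sketch needs, because a finite subgroup of $T$ may simultaneously lie in $S\cap T$, and then the inference ``$q$ collapses only a finite subgroup of $T$, so $(C/C^{00})/K\cong C/C^{00}$ by the torus fact'' fails. Concretely, if $C/C^{00}\cong \mathrm{U}(2)=S\cdot T$ with $S=\SU(2)$ and $T$ the scalar circle, and $K=\{\pm I\}\subseteq S\cap T$, then $(S\cdot T)/K\cong \SO(3)\times T\not\cong \mathrm{U}(2)$ (the map $u\mapsto(\det u,\mathrm{Ad}_u)$ has kernel exactly $\{\pm I\}$, and the fundamental groups differ). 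So quotienting by a finite subgroup of the central torus can change the isomorphism type whenever the kernel also meets $S$, and nothing you have written rules out $K\cap S\neq 1$: rigidity tells you only that $K\cap S=1$ is \emph{necessary} for an isomorphism, while the identification producing such a $K$ happens inside $R$, which the exactness of $G/R$ cannot see directly. This verification is precisely the point on which Proposition \ref{trivialquotient} turns; the paper's own closing sentence (``isogenous compact connected commutative Lie groups are isomorphic'') is itself very brisk at this same spot and implicitly needs the kernel to interact trivially with the image of $C'$. Your instinct about where the difficulty lies is therefore sound, but a proposal that names this matching as ``the hard part'' without carrying it out --- and whose sketched route through it is invalid as stated --- does not contain a proof.
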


\begin{proof}  
We can assume $R$ is definably compact, so $R = Z(G)^0$.

Suppose $G/G^{00}$ is isomorphic to $C/C^{00}$.
By \ref{almexact-isog}, $G$ has the almost exactness property, and in particular $R^{00}$ has finite index in
$G^{00} \cap R$. Set now $G_1 = G/R = G/Z(G)^0$, $C_1 = C/Z(G)^0 = C/Z(C)^0$ and $D_1 = D/Z(G)^0 = D/Z(D)^0$.
Note that $G_1$, $C_1$, $D_1$ are all semisimple (and definably connected), their center is finite, and
$C_1 \cap D_1 \subset Z(G_1)$. Moreover $G_1 = C_1 \cdot D_1$, $C_1$ is definably compact and $D_1$ has no definably compact parts.
In order to show that $G_1$ has the exactness property, by Lemma \ref{reduction-almost-exactness} (see Remark \ref{reduction-exactness}), it is enough to check that $C_1^{00} = G_1^{00} \cap C_1$.
If not, then $C_1 \cap D_1 \subset D_1 = D_1^{00} \subset G_1^{00}$ is not trivial, and
the finite center of $G_1/G_1^{00}$ is not isomorphic to the finite center of $C_1/C_1^{00}$, which is in contradiction with the fact that $G/G^{00}$ is isomorphic to $C/C^{00}$.   
  
Conversely, suppose $R^{00}$ has finite index in $G^{00} \cap R$ and $G/R$ has the exactness property.
By \ref{reduction-almost-exactness} (taking $H = R$), $G$ has the almost exactness property, so $G/G^{00}$ is isogenous to
$C/C^{00}$ (\ref{almexact-isog}). With the same argument used before, we can deduce that $(G^{00} \cap C)/C^{00} \subset (R \cdot C^{00})/C^{00}$, i.e. every element which is in $G^{00} \cap C$ and not in $C^{00}$ belongs to $R$ (otherwise $G/R$ does not have the exactness property). Since compact connected commutative Lie groups which are isogenous are actually isomorphic, we can conclude that $G/G^{00}$ is isomorphic to $C/C^{00}$. 
\end{proof}

%%%%%%%%%%%%%%%%%%%%%%%%%%
%We show now that for definable groups with a good decomposition,
%$G/G^{00} = (Z(\bar{G})^0/Z(\bar{G})^{00}) \cdot ([\bar{G}, \bar{G}]/[\bar{G}, \bar{G}]^{00})$ is a Levi decomposition of $G/G^{00}$:

%\begin{Proposition}
%Suppose $G$ is a definable group with a good decomposition. Set $L_G = G/G^{00}$. Then 

%\begin{enumerate}

%\item $Z(L_G)^0$ is isomorphic to $Z(\bar{G})^0/Z(\bar{G})^{00}$. 

%\item $[L_G, L_G]$ is isomorphic to $[\bar{G}, \bar{G}]/[\bar{G}, \bar{G}]^{00}$. 

%\end{enumerate}

%\end{Proposition}

%\begin{proof}

%\end{proof}

%%%%%%%%%%%%%%%%%%%%%%%%%%%%%%%%%%%%%%%%%%%%%%%%%%%%%%%%%%%%%%
%%%%%%%%%%%%%%%%%%%%%%%%%%%%%%%%%%%%%%%%%%%%%%%%%%%%%%%%%%%%%%
%%%%%%%%%%%%%%%%%%%%%%%%%%%%%%%%%%%%%%%%%%%%%%%%%%%%%%%%%%%%%%

%%%%%%%%%%%%%%%%%%%%%%%%

\begin{Theorem} \label{strong=defamenable}
Let $G$ be a definably connected group. Then $G$ has the strong exactness property if and only if
$\bar{G}$ is definably compact.
\end{Theorem}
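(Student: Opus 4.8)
The plan is to prove both implications after first peeling off the torsion-free radical $W$, reducing to the case $W=1$, where $G=\bar G = C'\cdot D$ and ``$\bar G$ definably compact'' just says that $D_2$ is trivial.

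First I would establish that $G$ has the strong exactness property if and only if $\bar G = G/W$ does. Since $W$ is torsion-free, $W = W^{00}$ by Lemma \ref{torsion-free}, so $W = G^{00}\cap W\subseteq G^{00}$, and $W$ trivially has strong exactness (all its definable subgroups are torsion-free). For the forward direction, given a definable $\bar N\le\bar G$ I lift to $N = \pi^{-1}(\bar N)$, apply strong exactness of $G$ to get $N^{00}=G^{00}\cap N$, and project by $\pi$: Lemma \ref{image}$(i)$ gives $\pi(N^{00})=\bar N^{00}$, while $W\subseteq G^{00}\cap N$ gives $\pi(G^{00}\cap N)=\bar G^{00}\cap\bar N$, so $\bar N^{00}=\bar G^{00}\cap\bar N$. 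The converse is the strong exactness version of the reduction Lemma \ref{reduction-almost-exactness} (Remark \ref{reduction-exactness}) applied to the normal subgroup $W$, whose hypotheses hold by the previous sentence. Hence it suffices to prove the theorem when $W=1$.

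For the implication ``not definably compact $\Rightarrow$ not strongly exact'' (with $W=1$), suppose $D_2\ne 1$ and let $\rho\colon G\to D_2$ be the definable surjection with kernel $C'\cdot\Gamma$; as $W=1$ this kernel is definably compact. Since $D_2$ is semisimple with no definably compact parts, it is an almost direct product of definably almost simple non-compact groups, so $D_2=D_2^{00}$ by Lemmas \ref{almsimple} and \ref{product}, whence $\rho(G^{00})=D_2^{00}=D_2$ by Lemma \ref{image}$(i)$. Inside a non-compact factor I choose a $1$-dimensional definable definably compact subgroup $\bar T$ (a copy of $\SO_2$), and put $H=\rho^{-1}(\bar T)$, a definable definably compact subgroup of $G$. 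Every element of $\bar T$ lifts into $G^{00}\cap H$ because $\rho(G^{00})=D_2\supseteq\bar T$, so $\rho(G^{00}\cap H)=\bar T$; but $\rho(H^{00})=\bar T^{00}$ by Lemma \ref{image}$(i)$, and $\bar T^{00}\subsetneq\bar T$ since $\bar T$ is definably compact of positive dimension. Thus $H^{00}\subsetneq G^{00}\cap H$ and $G$ is not strongly exact.

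For the converse I would show that a definably compact group has the strong exactness property; combined with the reduction of the second paragraph this gives ``$\bar G$ definably compact $\Rightarrow$ strongly exact''. For definable $H\le G$ one always has $H^{00}\subseteq G^{00}\cap H$, and the reverse inclusion is equivalent to injectivity of the induced map $H/H^{00}\to G/G^{00}$. This is exactly where the definably compact (equivalently, $f$-generic) structure enters: $G/G^{00}$ and $H/H^{00}$ are the compact Lie groups produced by the standard part map, and a definable definably compact subgroup embeds compatibly, so the map is injective (Berarducci; Peterzil--Pillay). The main obstacle is precisely this last point, since the reduction Lemma \ref{reduction-almost-exactness} only controls \emph{normal} subgroups, whereas strong exactness quantifies over \emph{all} definable subgroups; one therefore needs the genuine fact that passing to $G/G^{00}$ commutes with restriction to an arbitrary definable subgroup. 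Equivalently, one can route this through definable amenability, noting that definably compact groups (and the solvable $W$) are definably amenable and invoking the amenability criterion flagged above.
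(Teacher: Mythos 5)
Your proposal is correct and takes essentially the same route as the paper: both directions reduce modulo the torsion-free radical $W$ via the strong-exactness version of Lemma \ref{reduction-almost-exactness} (Remark \ref{reduction-exactness}), and for ``strongly exact $\Rightarrow$ definably compact'' both arguments rest on the two facts that $D_{2}=D_{2}^{00}$ (Lemmas \ref{image}, \ref{product}, \ref{almsimple}) and that a nontrivial $D_{2}$ contains a nontrivial definably compact definably connected definable subgroup --- the paper pushes forward (strong exactness passes to quotients, so it suffices that $C_{2}^{00}\subsetneq C_{2}=D_{2}^{00}\cap C_{2}$ inside $D_{2}$), while you pull the circle group back to $H=\rho^{-1}(\bar{T})\leq \bar{G}$ and compare $\rho(H^{00})=\bar{T}^{00}$ with $\rho(G^{00}\cap H)=\bar{T}$, which is the same computation in dual form. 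One caveat: your ``standard part map'' gloss for why definably compact groups are strongly exact is not itself an argument (it presupposes very good reduction, and the asserted injectivity of $H/H^{00}\to G/G^{00}$ \emph{is} the statement $H^{00}=G^{00}\cap H$ being proved), but since, exactly like the paper, you ultimately rest this direction on Berarducci's theorem for definably compact groups, the citation carries the weight and the gap is not substantive --- though you should drop the closing suggestion to route it through definable amenability, since in this paper the amenability equivalence (Corollary 4.11) is \emph{deduced from} this theorem and would make the argument circular.
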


\begin{proof}
If $\bar{G} = G/W$ is definably compact, then we can use the analogue of \ref{reduction-almost-exactness},
(see Remark \ref{reduction-exactness}) setting $H = W$, and deduce that $G$ has the strong exactness property.

Suppose now $G$ has the strong exactness property. It is easy to see that strong exactness is preserved under quotients. So (using the rudimentary decomposition theorem 2.6 of \cite{CCI}) it is enough to show that $D_{2}$ (the semisimple with no definably compact parts, part of $G$) does not have the strong exactness property, unless it is trivial. But as in the last paragraph of section 2, $D_{2}$ (if nontrivial) has a nontrivial definably compact definably connected definable subgroup $C_{2}$. Then $C_{2}^{00}$ is a proper subgroup of $C_{2}$, but $D_{2} = D_{2}^{000}$, which completes the proof.
\end{proof}

%%%%%%%%%%%%%%%%%%%%%%%%%%%%%%%

Hence using Proposition 4.6 and Corollary 4.12 of \cite{CCI} we can conclude  (where the reader is referred to \cite{CCI} for definitions):

\begin{Corollary}
Let $G$ be a definably connected group.  
Then the following are equivalent:

\begin{enumerate}
\item $G$ is definably amenable.

\item $G$ has a bounded orbit.

\item $G$ has the strong exactness property.

\item $G/W$ is definably compact.
\end{enumerate}

\end{Corollary}

\section{$D^{00}/D^{000}$ and universal covers of semisimple Lie groups}
We recall notation: $D$ is a definably connected central extension of a definable semisimple  (with no definably compact parts) group $D_{2}$ by a definably connected definably compact group $\Gamma$.

In this final section of the paper we will improve slightly on the results from section 3 by proving (with notation there).
\begin{Proposition} 
(i)  $D^{\prime}$ ($= [D,D]$) is perfect, hence equals $(D^{\prime})^{\prime}$). 
\newline
(ii) $D^{\prime} \cap \Gamma$ is finitely generated.
\newline
(iii) $D^{00}/D^{000}$ is (naturally) isomorphic to the quotient of a connected compact commutative Lie group by a finitely generated dense subgroup.
\end{Proposition}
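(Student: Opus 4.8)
The plan is to establish (i) and (ii) using the ($\bigvee$-definable) universal cover of $D_2$, and then to read off (iii) from Theorem 3.1 and Corollary 3.2 by a torus-quotient argument. Let $\widehat{D_2} \to D_2$ denote the universal cover of the finite-centre semisimple group $D_2$, a central extension whose kernel $K$ is the fundamental group $\pi_1(D_2)$; for such a group $\pi_1(D_2)$ is a finitely generated abelian group, and $\widehat{D_2}$ is perfect. Since $D$ is a definable central extension of $D_2$ by $\Gamma$, the universal property of $\widehat{D_2}$ supplies a homomorphism $\phi\colon \widehat{D_2}\to D$ lifting the identity on $D_2$. The first thing to check is that $\phi(\widehat{D_2}) = D'$: the image is perfect (being the image of the perfect group $\widehat{D_2}$), hence equals its own commutator subgroup and so is contained in $D'$; conversely any commutator $[g,h]$ in $D$ can be rewritten, modulo the central subgroup $\Gamma$, as $\phi([g',h'])$, giving $D' \subseteq \phi(\widehat{D_2})$. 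This yields (i) at once, since $D'$ is then a homomorphic image of the perfect group $\widehat{D_2}$, hence perfect, so $(D')' = D'$. For (ii), note that $\phi^{-1}(\Gamma) = \ker(\pi\circ\phi) = K$ and $\phi$ maps onto $D'$, so $\Gamma\cap D' = \phi(\phi^{-1}(\Gamma)) = \phi(K)$ is the image of the finitely generated group $K\cong\pi_1(D_2)$, and is therefore finitely generated.

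For (iii) I would argue as follows. By (i) we may replace $(D')'$ by $D'$ throughout Theorem 3.1 and Corollary 3.2, so that $D^{00}/D^{000}$ is isomorphic to $(E/\Gamma^{00})/\Lambda$, where $\Lambda := (\Gamma^{00}\cdot(\Gamma\cap D'))/\Gamma^{00}$ and $E/\Gamma^{00}$ is precisely the closure $\overline{\Lambda}$ of $\Lambda$ in the connected compact commutative Lie group $\Gamma/\Gamma^{00}$. By (ii), $\Lambda$ is finitely generated. Write $T := (\overline{\Lambda})^{0}$ for the identity component of the compact abelian Lie group $\overline{\Lambda}$, so $T$ is a torus and $\overline{\Lambda}/T$ is finite. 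Since $\Lambda$ is dense in $\overline{\Lambda}$ it meets every connected component, hence surjects onto the finite group $\overline{\Lambda}/T$; therefore $\Lambda_0 := \Lambda\cap T$ has finite index in $\Lambda$ (so is finitely generated) and $\overline{\Lambda} = T\cdot\Lambda$. Taking closures, $\overline{\Lambda_0}$ is a finite-index closed subgroup of the torus $T$, and a torus has no proper finite-index closed subgroup, so $\Lambda_0$ is dense in $T$. Finally the second isomorphism theorem gives
\[
 D^{00}/D^{000}\;\cong\;\overline{\Lambda}/\Lambda\;=\;(T\cdot\Lambda)/\Lambda\;\cong\;T/(T\cap\Lambda)\;=\;T/\Lambda_0,
\]
exhibiting $D^{00}/D^{000}$ as the quotient of the connected compact commutative Lie group $T$ by the finitely generated dense subgroup $\Lambda_0$. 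The isomorphism is natural since every object in the chain is canonically attached to $D$ (allowing $T$ to be trivial when $D^{00}=D^{000}$).

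The main obstacle is concentrated entirely in parts (i) and (ii): one needs, in the $o$-minimal / $\bigvee$-definable framework, both the universal cover $\widehat{D_2}$ of the semisimple group $D_2$ together with its lifting property into the definable central extension $D$, and the finite generation of the fundamental group $\pi_1(D_2)$. Granting these inputs (which belong to the theory of covers developed in this section and in \cite{HPP}), the identifications $D' = \phi(\widehat{D_2})$ and $\Gamma\cap D' = \phi(K)$ are routine, and the deduction of (iii) is the elementary torus argument above, whose only subtlety is the reduction from the possibly disconnected compact group $\overline{\Lambda}$ to its identity component $T$.
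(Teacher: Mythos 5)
Your part (iii) is correct, and in fact slightly cleaner than the paper's own argument: where the paper picks generators $a_{i}$ of $(\Gamma^{00}\cdot\Lambda_{0})/\Gamma^{00}$ and places each inside a closed connected $1$-dimensional subgroup, you pass directly to the identity component $T$ of $\overline{\Lambda}$, observe that density of $\Lambda$ forces $\Lambda$ to surject onto $\overline{\Lambda}/T$, and use the fact that a connected compact group has no proper closed finite-index subgroup; the second-isomorphism step $\overline{\Lambda}/\Lambda \cong T/(T\cap\Lambda)$ is a valid abstract-group computation. I would also note that (i), once stated, needs no cover at all: since $D_{2}$ is perfect (Claim 3.1(v) of \cite{HPP}), $\pi(D^{\prime}) = D_{2}$, hence $D = \Gamma\cdot D^{\prime}$, and centrality of $\Gamma$ gives $[D,D] = [D^{\prime},D^{\prime}]$ directly.

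The genuine gap is in your derivation of (ii), concentrated in the sentence ``the universal property of $\widehat{D_{2}}$ supplies a homomorphism $\phi\colon \widehat{D_{2}}\to D$ lifting the identity on $D_{2}$.'' The universal property of a universal cover concerns \emph{covering} homomorphisms, i.e.\ extensions with discrete kernel; here $\ker\pi = \Gamma$ is a positive-dimensional definably compact group, so no covering-space universal property applies to $\pi\colon D\to D_{2}$. Even for real Lie groups, the lift of $\widetilde{S}$ into a central extension of $S$ by a torus exists not by covering theory but by semisimplicity (Whitehead's lemma splits the Lie algebra extension, and one integrates over the simply connected $\widetilde{S}$): the Heisenberg group is a central extension of the simply connected group $(\R^{2},+)$ by $(\R,+)$ admitting no such lift, so simple connectedness of the cover alone cannot produce $\phi$. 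In the definable category over a saturated $K$ there is no integration step available at all; the purely abstract substitute (the universal central extension of the perfect group $D_{2}$) does yield a lift, but its kernel is the abstract Schur multiplier of $D_{2}$, which for groups like $\mathrm{PSL}_{2}(K)$ is enormous and in particular not finitely generated, so (ii) would be lost. This is exactly why the paper takes a detour you do not mention: by Lemma 5.2 (Theorem 6.1(2) of \cite{HPP}), $(D,\cdot,\Gamma)$ is abstractly isomorphic to a semialgebraic pair $(D_{1},\Gamma_{1})$ defined over the real algebraic numbers; the lift is then performed at the level of real points, $f\colon \widetilde{D_{3}(\R)}\to D_{1}(\R)$, where classical Lie theory justifies it, and the conclusion is transferred back via Lemma 1.1: each $[D_{1},D_{1}]_{n}\cap\Gamma_{1}$ is \emph{finite} and defined over $\R$, hence consists of real points, so $D_{1}^{\prime}\cap\Gamma_{1} = D_{1}(\R)^{\prime}\cap\Gamma_{1}(\R) = f(\pi_{1}(D_{3}(\R)))$ is finitely generated. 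Your closing claim that the needed lifting ``belongs to the theory of covers developed in this section and in \cite{HPP}'' is therefore not accurate as stated: the paper develops no intrinsic o-minimal lifting theorem, and your outline would need either this transfer-to-$\R$ mechanism or a genuine substitute (for instance, showing that $D^{\prime}$ itself, whose central kernel over $D_{2}$ meets every definable set in a finite set by Lemma 1.1, is a locally definable covering of $D_{2}$, and invoking o-minimal covering theory) to close the argument.
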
 

The ``new ingredient" compared with section 3, is the following, coming from \cite{HPP}:
\begin{Lemma} The structure $(D,\cdot,\Gamma)$ consisting of $D$ its group operation and a predicate for the subgroup $\Gamma$, is (abstractly) isomorphic to some $(D_{1},\cdot, \Gamma_{1})$ where $D_{1}, \Gamma_{1}$ are definable in $(K,+,\cdot)$ over the real algebraic numbers (and moreover $\Gamma_{1}$ is also definably compact although this will not be needed). 
\end{Lemma}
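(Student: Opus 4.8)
The plan is to descend the extension piece by piece, exploiting that only an \emph{abstract} group isomorphism (respecting the marked subgroup $\Gamma$) is demanded; the isomorphism is not required to preserve definable structure, so there is no conflict with the fact that semialgebraic groups satisfy $G^{00}=G^{000}$ while $D$ need not. I would first pass to the adjoint base $\bar H := D_{2}/Z(D_{2})$, which is centerless semisimple with no definably compact parts. The structure theory of definably simple groups in \cite{PPSIII}, together with \cite{HPP}, gives that each almost simple factor of $\bar H$ is definably, hence abstractly, isomorphic to $H_{i}(K)$ for a centerless semisimple algebraic group $H_{i}$; as such groups are defined over $\Q$, we obtain $\bar H \cong \bar H_{0}(K)$ with $\bar H_{0}$ defined over the real algebraic numbers.

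Next I would invoke the ($\bigvee$-definable) semialgebraic universal cover. Over the pure-field reduct there is a $\bigvee$-definable universal cover $p\colon \widetilde{\bar H}\to \bar H_{0}(K)$, itself defined over the real algebraic numbers, whose kernel $\Lambda$ is the (finitely generated abelian) fundamental group of the associated semisimple Lie group. Now $D$ is a definably connected central extension of $\bar H$ by $Z(D)$, where $Z(D)^{0}=\Gamma$ is definably compact commutative and $Z(D)/\Gamma$ is finite. Since the Lie algebra extension of a semisimple algebra splits (Whitehead), there is, in the $\bigvee$-definable category of \cite{HPP}, a homomorphism $\widetilde{\bar H}\to D$ lifting $p$, whose restriction to $\Lambda$ lands in $Z(D)$ and records a homomorphism $\phi\colon\Lambda\to Z(D)$. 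Consequently the abstract isomorphism type of the pair $(D,\Gamma)$ is determined by three pieces of data: the semialgebraic cover $\widetilde{\bar H}$, the definably compact commutative center $Z(D)$ with its marked subgroup $\Gamma$, and the homomorphism $\phi$, with $D$ recovered as the amalgam of $\widetilde{\bar H}$ and $Z(D)$ glued along $\Lambda$ via $\phi$.

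It then remains to descend $Z(D)$ and $\phi$ and to reassemble. By the structure of definably compact commutative groups, the pair $(Z(D),\Gamma)$ is abstractly isomorphic to a pair $(Z_{1},\Gamma_{1})$ of definably compact commutative semialgebraic groups defined over the real algebraic numbers, with $\Gamma_{1}$ definably connected definably compact (yielding the parenthetical addendum in the statement); fix such an isomorphism $\theta$, so $\theta(\Gamma)=\Gamma_{1}$. As $\Lambda$ is finitely generated, $\phi$ is determined by the images of finitely many generators, so $\theta\circ\phi\colon\Lambda\to Z_{1}$ is realized semialgebraically over the real algebraic numbers. Forming the corresponding semialgebraic amalgam $D_{1}$ of $\widetilde{\bar H}$ with $Z_{1}$ along $\theta\circ\phi$, the cover isomorphism together with $\theta$ induces an abstract isomorphism $(D,\cdot,\Gamma)\cong(D_{1},\cdot,\Gamma_{1})$ with all objects defined over the real algebraic numbers.

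The main obstacle I expect is making the amalgam construction precise at the level of $\bigvee$-definable groups: one must check that building $D$ (and $D_{1}$) from the cover $\widetilde{\bar H}$ together with the commutative center, glued along the finitely generated $\Lambda$, yields a genuine (definable) group with the marked central subgroup correctly placed, and that this construction commutes with descent to the real algebraic numbers. This is exactly the technical content imported from \cite{HPP}, where the $o$-minimal universal cover and the relevant central extensions are constructed and shown to be semialgebraic; the role of the present lemma is to record that, because the semisimple and cover data live over $\Q$, the real algebraic numbers suffice as field of definition. A secondary point to verify is that the finite gap between $Z(D)$ and $\Gamma=Z(D)^{0}$ can be absorbed while choosing $\theta$ compatibly with the marked subgroups.
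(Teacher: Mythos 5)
The paper itself gives no argument here: its entire proof is the citation ``This is contained in Theorem 6.1(2) of \cite{HPP} and its proof,'' so the only meaningful comparison is with that proof, and your overall architecture does match it --- descend the centerless semisimple part to a semialgebraic group over the real algebraic numbers via \cite{PPSI}, pass to the $\bigvee$-definable universal cover, and reconstitute $D$ as a gluing of the cover with the centre along the finitely generated kernel $\Lambda$. Your reduction of $(Z(D),\Gamma)$ is also repairable as stated: $\Gamma$ is divisible with torsion $(\Q/\Z)^{n}$ (Edmundo--Otero), so abstractly $\Gamma\cong \SO_{2}(K)^{n}$ and $Z(D)\cong\Gamma\oplus F$ with $F$ finite since extensions of finite groups by divisible groups split. (A minor quibble: your Whitehead/Levi justification for the lift $\widetilde{\bar H}\to D$ is Lie-theoretic; in the definable category one uses the lifting results for $\bigvee$-definable covers from \cite{HPP} directly, as this paper does over $\R$ in section 5.)

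The genuine gap is at the step that carries the lemma's actual content. ``As $\Lambda$ is finitely generated, $\theta\circ\phi\colon\Lambda\to Z_{1}$ is realized semialgebraically over the real algebraic numbers'' is a non sequitur: the finitely many images $\theta(\phi(\lambda_{i}))$ are a priori arbitrary points of $Z_{1}(K)$ with no reason to have real algebraic coordinates, and the amalgam you build from them is defined only over those points. What rescues this is precisely the freedom you never exploit: since only an abstract isomorphism is required, one may compose $\theta$ with an abstract automorphism of $Z_{1}$ moving the finitely generated subgroup $\theta(\phi(\Lambda))$ into the real algebraic points of $Z_{1}$; this is possible because those points form a divisible subgroup containing the full torsion of $Z_{1}$, and divisible abelian group theory then supplies an automorphism doing the job while fixing $\Gamma_{1}$ setwise --- but this argument must be made, not asserted. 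Relatedly, your amalgam $(\widetilde{\bar H}\times Z_{1})/\mathrm{graph}$ is a priori only locally ($\bigvee$-)definable; to obtain a \emph{definable} $D_{1}$ one should choose a definable section of the cover and observe that the resulting $2$-cocycle takes values in the countable group $\Lambda_{0}$, hence (by the compactness argument of Lemma 1.1) in a finite subset, so that $D_{1}$ is presented by a definable cocycle depending on finitely many constants --- and it is exactly those constants that the automorphism argument must place in $Z_{1}(\R\cap\mbox{algebraic numbers})$. You flag the amalgam issue and defer it to \cite{HPP}, which is legitimate given the paper defers wholesale, but as written the descent-of-parameters step is justified by a reason (finite generation of $\Lambda$) that does not yield the conclusion.
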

\begin{proof} This is contained in Theorem 6.1(2) of \cite{HPP} and its proof.
\end{proof}

We now work towards the proof of Proposition 5.1.
Let $D_{1}/\Gamma_{1} = D_{3}$, and $\pi:D_{1} \to D_{3}$ the canonical surjective homomorphism. So $D_{3}$ is a semialgebraic (semialgebraically connected) semisimple group in $K$ defined over $\R$. Then passing to real points, $D_{1}(\R)$ is a connected Lie group, $\Gamma_{1}(\R)$ is a connected closed subgroup, and $D_{3}(\R)$ is a connected semisimple Lie group, moreover they are all semialgebraic.   Let $\pi(\R)$ denote the surjective homomorphism from $D_{1}(\R)$ to $D_{3}(\R)$ induced by $\pi$ (with kernel $\Gamma_{1}(\R)$).  Let $u: \widetilde{D_{3}(\R)} \to D_{3}(\R)$ be the universal cover of $D_{3}(\R)$ as a topological (or Lie) group. By the universal properties of $u$ (and as $D_{1}(\R)$ is a central extension of $D_{3}(\R)$), there is a unique homomorphism of Lie groups  $f:\widetilde{D_{3}(\R)} \to D_{1}(\R)$ such that  $u = \pi(\R)\circ f$. 

Let $H$ denote $Im(f)$, the image of $\widetilde{D_{3}(\R)}$ under $f$.
With this notation:

\begin{Lemma}  $H$ is perfect.

\end{Lemma}
\begin{proof}  $\widetilde{D_{3}(\R)}$, as the universal cover of a connected semisimple Lie group, is known to be perfect. Hence so is its homomorphic image  $H$.
\end{proof}

\begin{Lemma}  $H = D_{1}(\R)^{\prime}$  (the commutator subgroup of $D_{1}(\R)$).
\end{Lemma}
\begin{proof} As $H$ maps onto  $D_{3}(\R)$ under $\pi(\R)$, $D_{1}(\R) = \Gamma_{1}(\R)\cdot H$. 

So $[D_{1}(\R),D_{1}(\R)] \subseteq H$, so by the lemma above, we get equality.
\end{proof}

Let $\Lambda$ be $ker(u)$  (where remember $u: \widetilde{D_{3}(\R)} \to D_{3}(\R)$ is the universal covering). So $\Lambda$ is the fundamental group of $D_{3}(\R)$ and as such is a finitely generated commutative group. Clearly
$H\cap \Gamma_{1}(\R)$ is precisely  $f(\Lambda)$ so is also a finitely generated commutative group, which we call  $\Lambda_{0}$.

\vspace{5mm}
\noindent
{\em Proof of (i) and (ii) of Proposition 5.1}  We will, to start off with, work with our groups $D_{1},\Gamma_{1},D_{3}$ which are definable (over $\R)$) in $(K,+,\cdot)$. By Lemma 1.1, for each $n$, 
\newline
$[D_{1},D_{1}]_{n} \cap \Gamma_{1}$ = $[D_{1}(\R),D_{1}(\R)]_{n}\cap \Gamma_{1}(\R)$ is a finite set $X_{n}$ say. 
\newline
(I) $\Lambda_{0} = D_{1}(\R)^{\prime} \cap \Gamma_{1}(\R)$ is thus equal to  $D_{1}^{\prime}\cap \Gamma_{1}$. 

The perfectness of $D_{1}(\R)^{\prime}$  (Lemmas 5.3 and 5.4) together with the fact that some 
$[D_{1},D_{1}]_{n}$ projects on to $D_{3}$, proves that 
\newline
(II) $D_{1}^{\prime}$ is perfect. 

\vspace{2mm}
\noindent
By Lemma 5.2, and (II), (I), $D^{\prime}$ is perfect and its intersection with $\Gamma$ is finitely generated, in fact is precisely $\Lambda_{0}$.  This completes the proof of (i) and (ii).

\vspace{5mm}
\noindent
{\em Proof of (iii) of Proposition 5.1.} With our current notation, and Theorem  3.1 and Corollary 3.2, $D^{00}/D^{000}$ is isomorphic to $E/(\Gamma^{00}\cdot \Lambda_{0})$ where $E$ is type-definable, contains $\Gamma^{00}$ and $E/\Gamma^{00}$ is the closure of $\Gamma^{00}\cdot \Lambda_{0}$ in the compact connected commutative Lie group $\Gamma/\Gamma^{00}$. Let $a_{1},..,a_{k}$ be a finite set of generators of  $(\Gamma^{00}\cdot\Lambda_{0})/\Gamma^{00}$. 
Each $a_{i}$ is a member of some closed connected $1$-dimensional subgroup $A_{i}$ of $\Gamma/\Gamma_{0}$  (by the structure of compact connected Lie groups). Now for each $i$, either the subgroup $\langle a_{i} \rangle$ generated by $a_{i}$ is infinite (cyclic) in which case it is dense in $A_{i}$, OR
$\langle a_{i} \rangle$ is finite (i.e. $a_{i}$ has finite order). Let $A$ be the subgroup of $\Gamma/\Gamma^{00}$ generated by the $A_{i}$ for which $\langle a_{i}\rangle$ is infinite, and $B$ be the  subgroup of $\Gamma/\Gamma^{00}$ generated by the $a_{i}$ of finite order. Then $A$ is a closed connected subgroup, $B$ is finite, and clearly $E/\Gamma^{00} = A\cdot B$. Hence  $(E/\Gamma^{00})/(\Gamma^{00}\cdot\Lambda_{0}/\Gamma^{00})$ is  isomorphic to 
the quotient of the connected group $A$ by its dense subgroup generated by the relevant $a_{i}$. 

\vspace{5mm}
\noindent
Let us summarise the relationship between $G^{00}/G^{000}$ and universal covers. Let us fix $G$ and let $D$ be as in Proposition 1.2, and $1 \to \Gamma \to D \to D_{2}$ as at the beginning of this section. We know that $D_{2}$ has ``very good reduction" so it makes sense to speak about the semisimple (semialgebraic) real Lie group $D_{2}(\R)$. Then:
\begin{Remark}  $G^{00}/G^{000}$ is ``naturally" of the form $A/\Lambda_{00}$ where $A$ is a commutative compact Lie group, and $\Lambda_{00}$ is a dense subgroup and is also  a quotient of the fundamental group of $D_{2}(\R)$. 
\end{Remark}

We also have:
\begin{Remark} For any dense finitely generated subgroup $\Lambda_{00}$ of a connected commutative compact Lie group $A$ there is a semialgebraic group $D$ in a saturated real closed field $(K,+,\cdot)$ such that $D^{00}/D^{000}$ is isomorphic to $A/\Lambda_{00}$. 
\end{Remark}
\noindent
{\em Brief explanation.}  This follows as in  Example 2.10 and Theorem 3.4 from \cite{CCI}, using the fact that for some finite product $G$ of copies of $\SL_{2}(\R)$, $\Lambda_{0}$ is the kernel of some covering homomorphism $H\to G$.

%%%%%%%%%%%%%%%%%%%%%%%%%%%%%%%%%%%%%%%%%%%%%%%%%%%%%%%%%%%%%%
%%%%%%%%%%%%%%%%%%%%%%%%%%%%%%%%%%%%%%%%%%%%%%%%%%%%%%%%%%%%%%
%%%%%%%%%%%%%%%%%%%%%%%%%%%%%%%%%%%%%%%%%%%%%%%%%%%%%%%%%%%%%%

\end{document}